\theoremstyle{plain}
\newtheorem{theorem}{Theorem}[section]
\newtheorem{proposition}[theorem]{Proposition}
\newtheorem{lemma}[theorem]{Lemma}
\theoremstyle{definition}
\newtheorem{definition}[theorem]{Definition}
\theoremstyle{remark}
\newtheorem{remark}[theorem]{Remark}
\title{Generalization Bounds with Minimal Dependency on Hypothesis Class via  Distributionally Robust Optimization}
\author{%
Yibo Zeng 
\\
  Columbia University\\
  \texttt{yibo.zeng@columbia.edu} \\
   \And
  Henry Lam \\
  Columbia University\\
  \texttt{henry.lam@columbia.edu} \\
}
\begin{document}

\maketitle

\begin{abstract}
Established approaches to obtain generalization bounds in data-driven optimization and machine learning mostly build on solutions from empirical risk minimization (ERM), which depend crucially on the functional complexity of the hypothesis class. In this paper, we present an alternate route to obtain these bounds on the solution from distributionally robust optimization (DRO), a recent data-driven optimization framework based on worst-case analysis and the notion of ambiguity set to capture statistical uncertainty. In contrast to the hypothesis class complexity in ERM, our DRO bounds depend on the ambiguity set geometry and its compatibility with the true loss function. Notably, when using statistical distances such as maximum mean discrepancy, Wasserstein distance, or $\phi$-divergence in the DRO, our analysis implies generalization bounds whose dependence on the hypothesis class appears the minimal possible: The bound depends solely on the true loss function, independent of any other candidates in the hypothesis class.  To our best knowledge, it is the first generalization bound of this type in the literature, and we hope our findings can open the door for a better understanding of DRO, especially its benefits on loss minimization and other machine learning applications.
\end{abstract}

\section{Introduction} \label{sec: intro}

We study generalization error in the following form. Let $\ell: \mathcal{F} \times \mathcal{Z} \rightarrow \mathbb{R}$ be a loss function over the function class $\mathcal{F}$ and sample space $\mathcal{Z} \subset \mathbb{R}^d$. Let
$\mathbb{E}_{P} [\ell (f, z)]$
be the expected loss under the true distribution $P$ for the random object $z$. This $\mathbb{E}_{P} [\ell (f, z)]$ can be an objective function ranging from various operations research applications to the risk of a machine learning model. Given iid samples $\{z_i\}_{i = 1}^n \sim P$, we solve a data-driven optimization problem or fit model to give solution $f_{\text{data}}$. The \emph{excess risk}, or optimality gap of $f_{\text{data}}$ with respect to the oracle best solution 
$f^*\in\arg\min_{f\in\mathcal{F}}\mathbb{E}_{P} [\ell (f, z)]$,
is given by
\begin{equation}\label{eq: excess risk}
\mathbb{E}_{P} [\ell (f_\text{data}, z)]- \mathbb{E}_{P} [\ell (f^*, z)]
\end{equation}
This gap measures the relative performance of a data-driven solution in future test data, giving rise to a direct measurement on the generalization error. Established approaches to obtain high probability bounds for \eqref{eq: excess risk} have predominantly focused on empirical risk minimization (ERM)
\begin{equation*}
   \hat{f}^* \in \arg\min_{f\in\mathcal{F}}
   \mathbb{E}_{\hat{P}}[\ell(f, z)]
\end{equation*}
for empirical distribution $\hat P$, or its regularized versions. A core determination of the quality of these bounds is the functional complexity of the model, or hypothesis class, which dictates the richness of the function class $\mathcal{F}$ or 
$\mathcal{L}: = \{\ell(f, \cdot) \mid f\in \mathcal{F}\}$
and leads to well-known measures such as the Vapnik-Chervonenkis (VC) dimension \citep{vapnik1991necessary}. On a high level, these complexity measures arise from the need to uniformly control the empirical error, which in turn arises from the a priori uncertainty on the decision variable $f$ in the optimization. 

In this paper, we present an alternate route to obtain concentration bounds for \eqref{eq: excess risk} on solutions obtained from distributionally robust optimization (DRO). The latter started as a decision-making framework for optimization under stochastic uncertainty \citep{delage2010distributionally, goh2010distributionally, wiesemann2014distributionally}, and has recently surged in popularity in machine learning, thanks to its abundant connections to regularization and variability penalty \citep{gao2017wasserstein, chen2018robust,gotoh2018robust, lam2019recovering,duchi2019variance,shafieezadeh2019regularization, kuhn2019wasserstein,blanchet2019profile} and risk-averse interpretations \citep{ruszczynski2006optimization, rockafellar2007coherent}. Instead of replacing the unknown true expectation $\mathbb E_{P}[\cdot]$ by an empirical expectation $\mathbb E_{\hat P}[\cdot]$, DRO hinges on the creation of an \emph{uncertainty set} or \emph{ambiguity set} $\mathcal K$. This set lies in the space of probability distribution $\mathcal P$ on $z$ and is calibrated from data. It obtains a solution $f_\text{dro}^*$ by minimizing the worst-case expected loss among $\mathcal K$
\begin{equation}\label{dro}
\begin{aligned}
   	f_\text{dro}^* \in \arg\min_{f\in \mathcal{F}}\max_{Q \in \mathcal{K}} \mathbb{E}_{Q} [\ell (f, z)].
\end{aligned}
\end{equation}
The risk-averse nature of DRO is evident from the presence of an adversary that controls $Q$ in \eqref{dro}. Also, if $\mathcal K$ is suitably chosen so that it contains $P$ with confidence (in some suitable sense) and shrinks to singleton as data size grows, then one would expect $f_\text{dro}^*$ to eventually approach the true solution, which also justifies the approach as a consistent training method. The latter can often be achieved by choosing $\mathcal K$ as a neighborhood ball surrounding a baseline distribution that estimates the ground truth (notably the empirical distribution), and the ball size is measured via a statistical distance.

Our main goal is to present a line of analysis to bound \eqref{eq: excess risk} for DRO solutions, i.e., 
\begin{equation}\label{eq: dro goal}
    \mathbb{E}_P [\ell(f_\text{dro}^*, z)] -  \mathbb{E}_P [\ell(f^*, z)]
\end{equation}
which, instead of using functional complexity measures as in ERM, relies on the ambiguity set geometry and its compatibility with \emph{only} the true loss function. More precisely, this bound depends on two ingredients: (i) the probability that the true distribution lies in $\mathcal K$, i.e., $\mathbb{P}[P\in \mathcal{K}]$ and, (ii) given that this occurs, the difference between the robust and true objective functions evaluated at the true solution $f^*$, i.e., $\max_{Q \in \mathcal{K}} \mathbb{E}_{Q} [ \ell(f^*, z)] - \mathbb{E}_{P} [\ell (f^*, z)]$. These ingredients allow us to attain generalization bounds that depend on the hypothesis class in a distinct, less sensitive fashion from ERM.
Specifically, when using the maximum mean discrepancy (MMD), Wasserstein distance, or $\phi$-divergence as a statistical distance in DRO, our analysis implies generalization bounds whose dependence on the hypothesis class $\mathcal{L}$ appears the minimal possible: The bound depends solely on the true loss function $\ell (f^*, \cdot)$, independent of any other $\ell(f, \cdot)\in\mathcal{L}$. 

Note that although there exist generalization bounds that do not utilize conventional complexity measures like the VC dimension~\citep{vapnik1991necessary}, they are all still dependent on other candidates in the hypothesis class and therefore, are different from our results. Examples include generalization bounds based on hypothesis stability  \citep{bousquet2002stability}, algorithmic robustness \citep{xu2012robustness}, 
the RKHS norm  \citep{staib2019distributionally}, and generalization disagreement~\citep{jiang2021assessing}.
These approaches all rely on the data-driven algorithmic output $f_\text{data}$, which  varies randomly among the hypothesis class due to its dependence on the random training data. Therefore, to translate into generalization bounds, they subsequently require taking expectation over $f_{\text{data}}$ (\cite[Definition 3 and Theorem 11]{bousquet2002stability}; \cite[Theorem 4.2]{jiang2021assessing})  or taking a uniform bound over $\mathcal{F}$ (\cite[Definition 2 and Theorem 1]{xu2012robustness}; \cite[Theorem 4.2]{staib2019distributionally}). 
In contrast, our bound depends on the hypothesis class through the deterministic $\ell (f^*, \cdot)$, in a way that links to the choice of ambiguity set distance.
To our best knowledge, generalization bounds of our type have never appeared in the literature (not only for DRO but also other ML approaches). We hope such a unique property will be useful in developing better machine learning algorithms in the future, especially in harnessing DRO on loss minimization and broader statistical problems.

Finally, our another goal is to conduct a comprehensive review on how DRO intersects with machine learning, which serves to position our new bounds over an array of motivations and interpretations of DRO. This contribution is in Section \ref{sec:literature} before we present our main results (Section \ref{sec: general results} - \ref{sec: wasserstein and chi square}) and numerical experiments (Section \ref{sec:numerical_experiments}).

\section{Related Work and Comparisons}\label{sec:literature}
DRO can be viewed as a generalization of (deterministic) robust optimization (RO) \citep{ben2009robust, bertsimas2011theory}. The latter advocates the handling of unknown or uncertain parameters in optimization problems via a worst-case perspective, which often leads to minimax formulations. \cite{xu2009robustness, xu2010robust} show the equivalence of ERM regularization with RO in some statistical models, and \cite{xu2012robustness} further concretizes the relation of generalization with robustness. 


DRO, which first appeared in \cite{scarf1957min} in the context of inventory management, applies the worst-case idea to stochastic problems where the underlying distribution is uncertain. Like in RO, it advocates a minimax approach to decision-making, but with the inner maximization resulting in the worst-case distribution over an ambiguity set $\mathcal K$ of plausible distributions. This idea has appeared across various disciplines like stochastic control \citep{petersen2000minimax} and economics \citep{hansen2008robustness,glasserman2014robust}. Data-driven DRO constructs and calibrates $\mathcal K$ based on data when available. The construction can be roughly categorized into two approaches: 
(i) neighborhood ball using statistical distance, which include most commonly $\phi$-divergence \citep{ben2013robust,bayraksan2015data,jiang2016data,lam2016robust,lam2018sensitivity} and Wasserstein distance \citep{esfahani2018data,blanchet2019quantifying,gao2016distributionally,chen2018robust}; 
(ii) partial distributional information including moment \citep{ghaoui2003worst,delage2010distributionally,goh2010distributionally,wiesemann2014distributionally,hanasusanto2015distributionally}, distributional shape  \citep{popescu2005semidefinite,van2016generalized,li2016ambiguous,chen2020discrete} and marginal \citep{chen2018distributionally,doan2015robustness,dhara2021worst} constraints. 
The former approach has the advantage that the ambiguity set or the attained robust objective value consistently approaches the truth \citep{ben2013robust,bertsimas2018robust}. The second approach, on the other hand, provides flexibility to decision-maker when limited data is available which proves useful on a range of operational or risk-related settings \citep{zhang2016distributionally,lam2017tail,zhao2017distributionally}.


The first approach above, namely statistical-distance-based DRO, has gained momentum especially in statistics and machine learning in recent years. We categorize its connection with statistical performance into three lines, and position our results in this paper within each of them.

\subsection{Absolute Bounds on Expected Loss}\label{sec:absolute}
The classical approach to obtain guarantees for data-driven DRO is to interpret the ambiguity set $\mathcal K$ as a nonparametric confidence region, namely that 
$\mathbb P[P\in\mathcal K]\geq1-\delta$ for small $\delta\in (0, 1)$. In this case, the confidence guarantee on the set can be translated into a confidence bound on the true expected loss function evaluated at the DRO solution $f^*_{\text{dro}}$ in the form
\begin{equation}
\mathbb{P}[ \mathbb{E}_P [\ell (f^*_{\text{dro}}, z)] \leq 
\max_{Q\in \mathcal{K}} \mathbb{E}_Q[\ell (f^*_{\text{dro}}, z)]] \geq 1- \delta\label{abs bound}
\end{equation}
via a direct use of the worst-case definition of $\max_{Q\in \mathcal{K}} \mathbb{E}_Q[\cdot]$. This implication is very general, with $\mathcal K$ taking possibly any geometry (e.g., \cite{delage2010distributionally,ben2013robust, bertsimas2018robust, esfahani2018data}). A main concern on results in the form \eqref{abs bound} is that the bound could be loose (i.e., a large $\max_{Q\in \mathcal{K}} \mathbb{E}_Q[\ell (f^*_{\text{dro}}, z)]$). This, in some sense, is unsurprising as the analysis only requires a confidence guarantee on the set $\mathcal K$, with no usage of other more specific properties, which is also the reason why the bound \eqref{abs bound} is general. When $\mathcal K$ is suitably chosen, a series of work has shown that the bound \eqref{abs bound} can achieve tightness in some well-defined sense. \cite{van2020data,sutter2020general} show that when $\mathcal K$ is a Kullback-Leibler divergence ball, $\max_{Q\in \mathcal{K}} \mathbb{E}_Q[\ell (f^*_{\text{dro}}, z)]$ in \eqref{abs bound} is the minimal among all possible data-driven formulations that satisfy a given exponential decay rate on the confidence. \cite{lam2019recovering,gupta2019near,duchi2021statistics} show that for divergence-based $\mathcal K$, $ \max_{Q\in \mathcal{K}} \mathbb{E}_Q[ \ell (f, z)]$ matches the confidence bound obtained from the standard central limit theorem by deducing that it is approximately $\mathbb{E}_{\hat P}[ \ell (f, z)]$ plus a standard deviation term (see more related discussion momentarily).

Our result leverages part of the above ``confidence translation'' argument, but carefully twisted to obtain excess risk bounds for \eqref{eq: excess risk}. We caution that \eqref{abs bound} is a result on the validity of the estimated objective value $\max_{Q\in \mathcal{K}} \mathbb{E}_Q[  \ell (f_{\text{dro}}^*, z)]$ in bounding the true objective value $\mathbb{E}_P[  \ell (f_{\text{dro}}^*, z)]$. The excess risk \eqref{eq: excess risk}, on the other hand, measures the generalization performance of a solution in comparison with the oracle best. The latter is arguably more intricate as it involves the unknown true optimal solution $f^*$ and, as we will see, \eqref{abs bound} provides an intermediate building block in our analysis of \eqref{eq: excess risk}.


\subsection{Variability Regularization}\label{sec:variability}
In a series of work \citep{lam2016robust,lam2018sensitivity,gotoh2018robust, duchi2019variance,duchi2021statistics}, it is shown that DRO using divergence-based ball, i.e., 
$\mathcal K=\{Q\in\mathcal P:D(Q,\hat P)\leq\eta\}$ for some threshold $\eta>0$ and $D$ a $\phi$-divergence (e.g., Kullback-Leibler, $\chi^2$-distance), satisfies a Taylor-type expansion
\begin{equation}
\max_{Q\in \mathcal{K}} \mathbb{E}_Q[ \ell (f,z)]= \mathbb{E}_{\hat{P}} [\ell ( f, z)]+C_1(f)\sqrt\eta+C_2(f)\eta+\cdots\label{expansion}
\end{equation}
where $C_1(f)$ is the standard deviation of the loss function, $\sqrt{\text{Var}[\ell(f,z)]}$, multiplied by a constant that depends on $\phi$. Similarly, if $D$ is the Wasserstein distance and $\eta$ is of order $1/n$, \eqref{expansion} holds with $C_1(f)$ being the gradient norm or the Lipschitz norm \citep{blanchet2019profile,shafieezadeh2019regularization,gao2017wasserstein,blanchet2019confidence}. Furthermore, \cite{staib2019distributionally}, which is perhaps closest to our work, studies MMD as the DRO statistical distance and derives a high-probability bound for $\max_{Q\in \mathcal{K}} \mathbb{E}_Q[ \ell (f,z)]$ similar to the RHS of \eqref{expansion}, with $C_1(f)$ being the reproducing kernel Hilbert space (RKHS) norm of $\ell ( f, \cdot)$. Results of the form \eqref{expansion} can be used to show that $\max_{Q\in \mathcal{K}} \mathbb{E}_Q[ \ell (f,z)]$, upon choosing $\eta$ properly (of order $1/n$), gives a confidence bound on $\mathbb{E}_{P} [\ell (f, z)]$ \citep{lam2019recovering, duchi2021statistics}. Moreover, this result can be viewed as a duality of the empirical likelihood theory \citep{lam2017empirical,blanchet2019profile,duchi2021statistics,blanchet2021sample}. 

In connecting \eqref{expansion} to the solution performance, there are three implications. First, the robust objective function $\max_{Q\in \mathcal{K}} \mathbb{E}_Q[ \ell (f,z)]$ can be interpreted as approximately a mean-variance optimization, and \cite{gotoh2018robust,gotoh2021calibration} prove that, thanks to this approximation, the DRO solution can lower the variance of the attained loss which compensates for its under-performance in the expected loss, thus overall leading to a desirable risk profile. \cite{gotoh2018robust,gotoh2021calibration} have taken a viewpoint that the variance of the attained loss is important in the generalization. On the other hand, when the expected loss, i.e., the true objective function $\mathbb{E}_{P} [\ell (f, z)]$, is the sole consideration, the approximation \eqref{expansion} is used in two ways. One way is to obtain bounds in the form
\begin{equation}
\mathbb{E}_P[\ell (f^*_\text{dro}, z)]
\leq\min_{f\in\mathcal F}\{ \mathbb{E}_{P} [\ell (f, z)]+C_1( f)/\sqrt n\}+O(1/n)\label{var reg}
\end{equation}
thus showing that DRO performs optimally, up to $O(1/n)$ error, on the variance-regularized objective function \cite{duchi2019variance}. From this, \cite{duchi2019variance} deduces that under special situations where there exists $f$ with both small risk $\mathbb{E}_{P} [\ell (f, z)]$ and variance $\text{Var}[\ell (f, z)]$, \eqref{var reg} can be translated into a small excess risk bound of order $1/n$. Moreover, such an order also desirably appears for DRO in some non-smooth problems where ERM could bear $1/\sqrt n$. The second way is to use DRO as a mathematical route to obtain uniform bounds in the form
\begin{equation}
\mathbb{E}_{P} [\ell (f, z)]\leq \mathbb{E}_{\hat{P}}[ \ell ( f, z)]+C_1(f)/\sqrt n+O(1/n),\ \forall f\in\mathcal{F},\label{uni ERM}
\end{equation}
which is useful for proving the generalization of ERM. In particular, we can translate \eqref{uni ERM} into a high probability bound for $\mathbb{E}_{P}[\ell (\hat{f}^*, z)]-\mathbb{E}_{P}[ \ell (f^*, z)]$ of order $1/\sqrt n$. This use is studied in, e.g., \cite{gao2020finite} in the case of Wasserstein and \cite{staib2019distributionally} in the case of MMD. 


Despite these rich results, both \eqref{var reg} and \eqref{uni ERM}, and their derived bounds on the excess risk \eqref{eq: excess risk}, still depend on other candidates in the hypothesis class through the choice of the ball size $\eta$ or the coefficient $C_1(f)$. Our main message in this paper is that this dependence can be completely removed, when using the solution of a suitably constructed DRO. 
Note that this is different from  localized results for ERM, e.g., local Rademacher complexities \citep{bartlett2005local}. Although the latter establishes better generalization bounds by mitigating some dependence on the hypothesis class, local dependence on other candidates still exists and cannot be removed.

\subsection{Risk Aversion}
It is known that any coherent risk measure (e.g., conditional value-at-risk) \citep{ruszczynski2006optimization, rockafellar2007coherent} of a random variable (in our case $\ell(f,z)$) is equivalent to the robust objective value $\max_{Q\in \mathcal{K}} \mathbb{E}_Q[ \ell (f,z)]$ with a particular $\mathcal K$. Thus, DRO is equivalent to a risk measure minimization, which in turn explains its benefit in controlling tail performances. In machine learning, this rationale has been adopted to enhance performance on minority subpopulations  and in safety or fairness-critical systems \citep{hashimoto2018fairness}. 
A related application is adversarial training in deep learning, in which RO or DRO is used to improve test performances on perturbed input data and adversarial examples \citep{GoodfellowSS14, madry2018towards}. DRO has also been used to tackle distributional shifts in transfer learning \citep{Sagawa2020Distributionally, liu2021stable}.
In these applications, numerical results suggest that RO and DRO can come at a degradation to the average-case performance \citep{ kurakin2017adversarial, madry2018towards, duchi2021learning} (though not universally, e.g.,
\cite{TsiprasSETM19} observes that robust training can help reduce generalization error with very few training data). 
To connect, our result in this paper serves to justify a generalization performance of DRO even in the average case that can potentially outperform ERM.

\section{General Results}%
\label{sec: general results}

We first present a general DRO bound. 
\begin{theorem}[A General DRO Bound] \label{thm: general dro} 
Let $\mathcal{Z}\subset \mathbb{R}^d$ be a sample space, $P$ be a distribution on $\mathcal{Z}$, $\{z_i\}_{i = 1}^n$ be iid samples from $P$, and $\mathcal{F}$ be the function class. For loss function $\ell: \mathcal{F}\times \mathcal{Z} \rightarrow \mathbb{R}$,
DRO solution $f^*_{\text{dro}}\in \arg\min_{f\in \mathcal{F}} \max_{Q\in \mathcal{K}} \mathbb{E}_{Q} [\ell (f, z)]$ satisfies that for any $ \varepsilon > 0$,
\begin{equation}\label{eq: general bound}
    \mathbb{P}[\mathbb{E}_P \ell (f^*_{\text{dro}}, z) - \mathbb{E}_P \ell(f^*, z) > \varepsilon]
    \leq 
	\mathbb{P}[P \notin \mathcal{K}]
	+ \mathbb{P}[
	\max_{Q\in \mathcal{K}} \mathbb{E}_Q \ell (f^*, z) - \mathbb{E}_P \ell(f^*, z) > \varepsilon| P \in \mathcal{K}].
\end{equation}
\end{theorem}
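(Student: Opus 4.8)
The plan is to condition on the event $\{P\in\mathcal K\}$ and exploit two elementary facts: the worst-case (robustness) inequality and the optimality of $f^*_{\text{dro}}$ in the DRO problem. First I would observe that on $\{P\in\mathcal K\}$, since $P$ is a feasible element of the ambiguity set, the very definition of the worst-case expectation gives
\begin{equation*}
\mathbb{E}_P[\ell(f^*_{\text{dro}},z)]\;\le\;\max_{Q\in\mathcal K}\mathbb{E}_Q[\ell(f^*_{\text{dro}},z)].
\end{equation*}
Next, because $f^*_{\text{dro}}$ minimizes $f\mapsto\max_{Q\in\mathcal K}\mathbb{E}_Q[\ell(f,z)]$ over $\mathcal F$ and $f^*\in\mathcal F$, one has $\max_{Q\in\mathcal K}\mathbb{E}_Q[\ell(f^*_{\text{dro}},z)]\le\max_{Q\in\mathcal K}\mathbb{E}_Q[\ell(f^*,z)]$. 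Chaining these two inequalities and subtracting $\mathbb{E}_P[\ell(f^*,z)]$ from both sides yields, on the event $\{P\in\mathcal K\}$,
\begin{equation*}
\mathbb{E}_P[\ell(f^*_{\text{dro}},z)]-\mathbb{E}_P[\ell(f^*,z)]\;\le\;\max_{Q\in\mathcal K}\mathbb{E}_Q[\ell(f^*,z)]-\mathbb{E}_P[\ell(f^*,z)].
\end{equation*}

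I would then decompose the probability of the excess-risk event $E:=\{\mathbb{E}_P[\ell(f^*_{\text{dro}},z)]-\mathbb{E}_P[\ell(f^*,z)]>\varepsilon\}$ according to whether or not $P\in\mathcal K$. Writing $\mathbb{P}[E]=\mathbb{P}[E\cap\{P\notin\mathcal K\}]+\mathbb{P}[E\cap\{P\in\mathcal K\}]$, the first term is at most $\mathbb{P}[P\notin\mathcal K]$. For the second term, the displayed inclusion above shows that on $\{P\in\mathcal K\}$ the event $E$ is contained in $G:=\{\max_{Q\in\mathcal K}\mathbb{E}_Q[\ell(f^*,z)]-\mathbb{E}_P[\ell(f^*,z)]>\varepsilon\}$, so $E\cap\{P\in\mathcal K\}\subseteq G\cap\{P\in\mathcal K\}$ and hence $\mathbb{P}[E\cap\{P\in\mathcal K\}]\le\mathbb{P}[G\cap\{P\in\mathcal K\}]=\mathbb{P}[G\mid P\in\mathcal K]\,\mathbb{P}[P\in\mathcal K]\le\mathbb{P}[G\mid P\in\mathcal K]$. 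Adding the two pieces gives exactly \eqref{eq: general bound}.

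There is essentially no hard analytic content; the only points requiring care are bookkeeping ones, and I expect these to be the main (minor) obstacle. I would emphasize that $f^*_{\text{dro}}$ is a random function of the sample whereas $f^*$ is deterministic, so all the events above are measurable with respect to the sample and the pointwise manipulations are legitimate before passing to probabilities. I would also dispose of the degenerate case $\mathbb{P}[P\in\mathcal K]=0$ separately, where the conditional probability is conventionally undefined but the asserted bound holds trivially because $\mathbb{P}[P\notin\mathcal K]=1$. Finally, if $\ell$ is only assumed to be $P$-integrable (and $Q$-integrable over $\mathcal K$) rather than bounded, I would note that the inequalities involving $\mathbb{E}_P[\ell(f^*_{\text{dro}},z)]$ and $\max_{Q\in\mathcal K}\mathbb{E}_Q[\ell(\cdot,z)]$ are to be read in the extended reals, which leaves the argument intact.
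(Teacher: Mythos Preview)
Your proposal is correct and follows essentially the same approach as the paper: the paper writes the excess risk as a three-term telescoping sum, bounds the middle term by DRO optimality and the first term by the worst-case property on $\{P\in\mathcal K\}$, and then decomposes the probability exactly as you do. Your version is slightly more explicit about the conditioning step and the degenerate case $\mathbb{P}[P\in\mathcal K]=0$, but the core argument is identical.
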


\begin{proof}[Proof of Theorem \ref{thm: general dro}.]
We first rewrite $\mathbb{E}_P [\ell (f^*_{\text{dro}}, z)] - \mathbb{E}_P [\ell(f^*, z)]$ as the following three terms:
$[\mathbb{E}_P [\ell (f^*_{\text{dro}}, z)] - \max_{Q\in \mathcal{K}} \mathbb{E}_Q [\ell (f^*_{\text{dro}}, z)]]  
+ [\max_{Q\in \mathcal{K}} \mathbb{E}_Q[\ell (f^*_{\text{dro}}, z)] - \max_{Q\in \mathcal{K}}\mathbb{E}_Q [\ell (f^*, z)]]
 +  [\max_{Q\in \mathcal{K}}\mathbb{E}_Q [\ell (f^*, z)] - \mathbb{E}_P[\ell(f^*, z)]].$
Note that the second term $\max_{Q\in \mathcal{K}} \mathbb{E}_Q\ell (f^*_{\text{dro}}, z) - \max_{Q\in \mathcal{K}}\mathbb{E}_Q \ell (f^*, z) \leq 0$ almost surely by the DRO optimality of $f^*_{\text{dro}}$.
Also, by definition of $\max_{Q\in \mathcal{K}}\mathbb{E}_Q [\cdot]$ as the worst-case objective function, we have $\max_{Q\in \mathcal{K}} \mathbb{E}_Q[ \ell (f,z)] \geq \mathbb{E}_{P} [\ell (f, z)]$ for all $f\in \mathcal{F}$ as long as $P \in \mathcal{K}$.  Thus, 
\begin{equation*}
    \begin{aligned}
	\mathbb{P}[\mathbb{E}_P \ell (f^*_{\text{dro}}, z) - \mathbb{E}_P \ell(f^*, z) > \varepsilon]
	\leq&~ \mathbb{P}[P \notin \mathcal{K}]
	+ \mathbb{P}[
	\max_{Q\in \mathcal{K}}\mathbb{E}_Q \ell (f^*, z) - \mathbb{E}_P \ell(f^*, z) > \varepsilon, P \in \mathcal{K}]\notag\\
	\leq&~ \mathbb{P}[P \notin \mathcal{K}]
	+ \mathbb{P}[
	\max_{Q\in \mathcal{K}}\mathbb{E}_Q \ell (f^*, z) - \mathbb{E}_P \ell(f^*, z) > \varepsilon| P \in \mathcal{K}].
\end{aligned}
\end{equation*}
This completes our proof. 
\end{proof}

A key feature of the bound in Theorem \ref{thm: general dro} is that it requires only the true loss function $\ell (f^*, \cdot)$. This contrasts sharply with other bounds that rely on other candidates in the hypothesis class. To see where this distinction arises, note that in proof of Theorem \ref{thm: general dro}, we divide $\mathbb{E}_P [\ell (f^*_{\text{dro}}, z)] - \mathbb{E}_P [\ell(f^*, z)]$ into three parts
where the second term is trivially $\leq0$ and the third term depends only on the true loss. The key is that the first term satisfies
 $   \mathbb{E}_P [\ell (f^*_{\text{dro}}, z)] - \max_{Q\in \mathcal{K}} \mathbb{E}_Q[\ell (f^*_{\text{dro}}, z)] \leq 0$
as long as $P \in \mathcal{K}$, thanks to the worst-case definition of the robust objective function $\max_{Q\in \mathcal{K}} \mathbb{E}_Q[\ell (\cdot, z)]$. In contrast, in the ERM case for instance, the same line of analysis gives
    $[\mathbb{E}_P[\ell (\hat{f}^*, z)] - \mathbb{E}_{\hat{P}}[\ell (\hat{f}^*, z)]]
	+ [ \mathbb{E}_{\hat{P}} [\ell (\hat{f}^*, z)] -  \mathbb{E}_{\hat{P}} [\ell (f^*, z)]]
	+[\mathbb{E}_{\hat{P}}[\ell (f^*, z)] - \mathbb{E}_P [\ell(f^*, z)]]$,	
and while the second and third terms are handled analogously, the first term depends on  $\hat{f}^*$ that varies randomly among the hypothesis class and is typically handled by the uniform bound
$\sup_{f\in \mathcal{F}} |\mathbb{E}_P [\ell (f, z)] - \mathbb{E}_{\hat{P}}[ \ell (f, z)]|$
that requires empirical process analysis \citep{van1996weak} and the complexity of the hypothesis class $\mathcal{F}$ or $\mathcal{L}$. Thus, in a sense, the worst-case nature of DRO ``transfers'' the uniformity requirement over the hypothesis class into alternate geometric requirements on only the true loss function.

\section{Specialization to MMD DRO}\label{sec_equiv}
\subsection{Generalization Bounds}\label{sec: mmd results}
Our next step is to use Theorem \ref{thm: general dro} to derive generalization bounds for a concrete DRO formulation. In the following, we use ambiguity set $\mathcal K$ as a neighborhood ball of the empirical distribution measured by statistical distance, namely
\begin{equation}
    \mathcal{K} = \{Q\in\mathcal P: D_{\text{MMD}}(Q, \hat{P}) \leq \eta\}\label{kernel ball}
    \end{equation}
for a threshold $\eta>0$. Moreover, we specialize in max mean discrepancy (MMD)~\citep{gretton2012kernel} as the choice of distance $D_\text{MMD}(\cdot,\cdot)$. 
MMD is a distance derived from the RKHS  norm, by using test functions constrained by this norm in an Integral Probability Metric (IPM) \citep{muller1997integral}.
Specifically, let $k: \mathcal{Z}\times \mathcal{Z} \rightarrow \mathbb{R}$ be a positive definite kernel function on $\mathcal{Z}$ and $(\mathcal{H}, \langle \cdot, \cdot \rangle)$ be the corresponding RKHS. For any distributions $Q_1$ and $Q_2$, the MMD distance is defined by the maximum difference of the integrals over the unit ball $\{h\in \mathcal{H}: \Vert h \Vert_{\mathcal{H}} \leq 1\}$. That is,
$D_{\text{MMD}}(Q_1, Q_2) : = \sup_{h\in \mathcal{H}: \Vert h \Vert_{\mathcal{H}} \leq 1} \int h \mathrm{d}Q_1 - \int h \mathrm{d}Q_2.$
In this way, MMD-DRO can be formulated as 
\begin{equation}\label{eq: mmd dro}
    \min_{f\in \mathcal{F}} \max_{Q: D_\text{MMD}(Q, \hat{P}) \leq \eta}
    \mathbb{E}_{Q}
    [\ell (f, z)]
\end{equation}

MMD is known to be less prone to the curse of dimensionality \citep{shawe2004kernel, smola2007hilbert, GrunewalderLGBPP12}, a property that we leverage in this paper, and such a property has allowed successful applications in statistical inference \citep{ gretton2012kernel},  generative models \citep{li2015generative, SutherlandTSDRS17}, reinforcement learning \citep{nguyen2020distributional}, and DRO \citep{staib2019distributionally, kirschner2020distributionally}. In particular, \cite{zhu2020kernel} studies the duality and optimization procedure for MMD DRO. For a recent comprehensive review of MMD, we refer readers to \cite{MAL-060}.

In the sequel, we adopt bounded kernels, i.e., $\sup_{z, z'\in \mathcal{Z}} \sqrt{k(z, z')}< +\infty$, to conduct our analysis. This assumption applies to many popular choices of kernel functions and more importantly, guarantees the so-called kernel mean embedding (KME)~\citep{MAL-060} is well-defined in RKHS. That is, 
$\mu_Q: = \int k(z, \cdot) \mathrm{d} Q\in \mathcal{H},~\forall~Q\in \mathcal{P}.$
This well-definedness of KME gives two important implications. One is that MMD is equivalent to the norm distance in KME:  $D_\text{MMD}(P, Q) = \Vert \mu_P - \mu_Q \Vert_{\mathcal{H}}$ \citep{borgwardt2006integrating, gretton2012kernel}. Second, it bridges the expectation and the inner product in the RKHS space~\citep{smola2007hilbert}: $\forall~ f \in \mathcal{H}$,
$\mathbb{E}_{x\sim P}[f(x)]
    = \int_{x} \langle f, k(x, \cdot) \rangle \mathrm{d} P(x)
    = \langle f, \mu_P \rangle.$
Both properties above will facilitate our analysis. We refer the reader to Appendix A for further details on KME.

\begin{theorem}[Generalization Bounds for MMD DRO]
 \label{thm: MMD} 
Adopt the notation and assumptions in Theorem \ref{thm: general dro}. Let $\mathcal{Z}$ be a compact subspace of $ \mathbb{R}^d $, $k: \mathcal{Z}\times \mathcal{Z} \rightarrow \mathbb{R}$ be a bounded continuous positive definite kernel, and $( \mathcal{H}, \langle \cdot,  \cdot \rangle_{ \mathcal{H}})$ be the corresponding RKHS. Suppose also that
(i) $\sup_{z \in \mathcal{Z}}\sqrt{ k(z, z) } \leq K$;
(ii)  $\ell (f^*, \cdot)\in \mathcal{H}$ with $ \Vert \ell (f^*, \cdot) \Vert_{ \mathcal{H}} \leq M$.
Then, for all $\delta \geq 0$, if we choose ball size
$\eta =\frac{K }{\sqrt{n} } (1 + \sqrt{2\log(1/\delta)})$ for MMD DRO~\eqref{eq: mmd dro}, then MMD DRO solution $f^*_{\text{M-dro}}$ satisfies 
\begin{equation*}
	\mathbb{E}_P [\ell (f^*_{\text{M-dro}}, z)] - \mathbb{E}_P [\ell(f^*, z)]
	\leq \frac{2K M}{\sqrt{n} } (1 + \sqrt{2 \log(1/\delta)})
\end{equation*}
with probability at least $ 1- \delta$.
\end{theorem}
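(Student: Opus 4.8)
The plan is to invoke Theorem~\ref{thm: general dro} with the specific MMD ball $\mathcal{K} = \{Q : D_{\text{MMD}}(Q, \hat P) \leq \eta\}$ and control each of the two terms on the right-hand side of~\eqref{eq: general bound} separately. The first term $\mathbb{P}[P \notin \mathcal{K}]$ requires bounding the probability that $D_{\text{MMD}}(P, \hat P) > \eta$, i.e., a concentration inequality for the empirical MMD distance to the truth. The second term requires showing that, conditional on $P \in \mathcal{K}$, the quantity $\max_{Q \in \mathcal{K}} \mathbb{E}_Q[\ell(f^*, z)] - \mathbb{E}_P[\ell(f^*, z)]$ is deterministically bounded by the claimed $2KM(1+\sqrt{2\log(1/\delta)})/\sqrt n$; in fact I expect this second term to be handled by a \emph{deterministic} inequality that holds whenever $P \in \mathcal{K}$, so that its conditional probability of exceeding the bound is zero.

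For the second term, the key is the RKHS geometry. Using the kernel mean embedding, for any $Q \in \mathcal{K}$ we have $\mathbb{E}_Q[\ell(f^*, z)] - \mathbb{E}_P[\ell(f^*, z)] = \langle \ell(f^*, \cdot), \mu_Q - \mu_P \rangle_{\mathcal{H}}$ by the reproducing-type identity $\mathbb{E}_Q[f] = \langle f, \mu_Q\rangle$ (valid since $\ell(f^*, \cdot) \in \mathcal{H}$ and the kernel is bounded so KME is well-defined). By Cauchy--Schwarz this is at most $\|\ell(f^*, \cdot)\|_{\mathcal{H}} \cdot \|\mu_Q - \mu_P\|_{\mathcal{H}} = \|\ell(f^*, \cdot)\|_{\mathcal{H}} \cdot D_{\text{MMD}}(Q, P)$. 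Now when $P \in \mathcal{K}$, a triangle inequality gives $D_{\text{MMD}}(Q, P) \leq D_{\text{MMD}}(Q, \hat P) + D_{\text{MMD}}(\hat P, P) \leq \eta + \eta = 2\eta$. Hence the second bracket is at most $2M\eta$, and substituting $\eta = \frac{K}{\sqrt n}(1 + \sqrt{2\log(1/\delta)})$ yields exactly $\frac{2KM}{\sqrt n}(1 + \sqrt{2\log(1/\delta)})$. So conditional on $P \in \mathcal{K}$, the second term in~\eqref{eq: general bound} vanishes when $\varepsilon$ equals the target bound.

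It then remains to show $\mathbb{P}[P \notin \mathcal{K}] = \mathbb{P}[D_{\text{MMD}}(P, \hat P) > \eta] \leq \delta$. This is the standard concentration result for MMD: one writes $D_{\text{MMD}}(P, \hat P) = \|\mu_P - \mu_{\hat P}\|_{\mathcal{H}} = \|\mathbb{E}_P[k(z,\cdot)] - \frac1n\sum_i k(z_i, \cdot)\|_{\mathcal{H}}$, which is a function of the iid sample with bounded differences (changing one $z_i$ moves it by at most $2K/n$ in norm, since $\|k(z,\cdot)\|_{\mathcal{H}} = \sqrt{k(z,z)} \leq K$), and whose expectation is bounded by $K/\sqrt n$ via the usual symmetrization/Jensen argument $\mathbb{E}\|\mu_P - \mu_{\hat P}\|_{\mathcal H} \leq \sqrt{\mathbb{E}\|\mu_P - \mu_{\hat P}\|_{\mathcal H}^2} = \sqrt{(\mathbb{E}_{z,z'}[k(z,z')] - \mathbb{E}[k(z,z)])/n}\cdot(\text{constant}) \leq K/\sqrt n$. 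McDiarmid's inequality then gives $\mathbb{P}[D_{\text{MMD}}(P,\hat P) > K/\sqrt n + t] \leq \exp(-nt^2/(2K^2))$; setting $t = K\sqrt{2\log(1/\delta)}/\sqrt n$ makes the right side equal $\delta$ and the threshold equal $\eta$. Combining the two terms through Theorem~\ref{thm: general dro} gives the probability bound $\delta$ for exceeding the target, i.e., the stated high-probability guarantee.

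The main obstacle is not conceptual but bookkeeping: getting the constant in $\mathbb{E}[D_{\text{MMD}}(P, \hat P)] \leq K/\sqrt n$ sharp enough, and making sure the McDiarmid bounded-difference constant lines up so that the two occurrences of $\eta$ in the triangle-inequality step (bounding both $D_{\text{MMD}}(Q,\hat P)$ and $D_{\text{MMD}}(P, \hat P)$ by $\eta$) are simultaneously consistent with the choice of $\eta$ that forces $\mathbb{P}[P \notin \mathcal{K}] \leq \delta$. One should also double-check that compactness of $\mathcal{Z}$ and continuity of $k$ are used only to ensure KME well-definedness and measurability (and perhaps attainment of the inner $\max$), so that the RKHS duality and the deterministic bound $2M\eta$ are rigorous; these are the assumptions flagged in the paragraph preceding the theorem.
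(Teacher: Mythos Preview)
Your proposal is correct and follows essentially the same approach as the paper: invoke Theorem~\ref{thm: general dro}, bound the second term deterministically by $2M\eta$ via the KME identity, Cauchy--Schwarz, and the triangle inequality on $D_{\text{MMD}}$, and handle the first term by the MMD concentration $\mathbb{P}[D_{\text{MMD}}(P,\hat P)>\eta]\leq\delta$. The only cosmetic difference is that the paper cites this concentration as a known result (Proposition~\ref{prop: concentration of mmd}, from Tolstikhin et al.) rather than sketching the McDiarmid-plus-expectation argument you outline.
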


We briefly overview our proof of Theorem \ref{thm: MMD}, leaving the details to Appendix~\ref{sec: proof of mmd dro}. In view of Theorem~\ref{thm: general dro}, the key lies in choosing the ball size $\eta$ to make a good trade-off between $\mathbb{P}[D_{\text{MMD}}(P, \hat{P}) \geq \eta]$ and an upper bound of $\max_{Q\in \mathcal{K}}\mathbb{E}_Q [\ell (f^*, z)] - \mathbb{E}_P [\ell(f^*, z)]$ when $D_{\text{MMD}}(P, \hat{P}) \leq \eta$. The former can be characterized by the established concentration rate of KME (See Proposition \ref{prop: concentration of mmd}). To get the latter bound, we use the properties of KME to translate  expectations into inner products:
\begin{align}
&~\max_{Q\in \mathcal{K}}\mathbb{E}_Q [\ell (f^*, z)] - \mathbb{E}_P [\ell(f^*, z)] 
= \max_{Q\in \mathcal{K}} 
\langle \ell (f^*, \cdot), \mu_{Q} - \mu_P \rangle 
\leq \max_{Q \in \mathcal{K}}\Vert \ell (f^*, \cdot) \Vert_{\mathcal{H}} D_{\text{MMD}}(Q, P) \nonumber\\ 
\leq&~ \Vert \ell (f^*, \cdot) \Vert_{\mathcal{H}} \max_{Q\in \mathcal{K}} \{D_{\text{MMD}}(Q, \hat{P}) + D_{\text{MMD}}(\hat{P}, P)\}
\leq  \Vert \ell (f^*, \cdot) \Vert_{\mathcal{H}} \cdot 2\eta. \label{eq: sketch of mmd dro proof}
\end{align}
From these, we can choose $\eta$ as indicated in Theorem \ref{thm: MMD} to conclude the result. 
In a nutshell, here we harness the compatibility between $\mathcal{K}$ and the true loss function $\ell (f^*, \cdot)$, more specifically the RKHS norm on $\ell (f^*, \cdot)$, and the dimension-free concentration of KME. Our general bound in Theorem \ref{thm: general dro} allows us to stitch these two ingredients together to obtain generalization bounds for MMD DRO.


\begin{wrapfigure}{r}{0.4\textwidth}
\vspace{-4em}
  \begin{center}
    \includegraphics[width=0.38\textwidth]{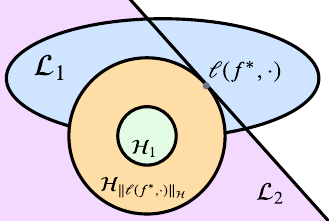}
  \end{center}
  \caption{Let $\mathcal{L}_1$ and $\mathcal{L}_2$  be two hypothesis classes with the same true loss function $\ell(f^*, \cdot)$.  $\mathcal{L}_1$ corresponds  to functions in the oval and $\mathcal{L}_2$ corresponds to functions below the straight line. In ERM, the uniform bound is taken directly over $\mathcal{L}_1$ (blue area) or $\mathcal{L}_2$ (pink area), while DRO gives the same bound that corresponds to uniformity over $\mathcal H_{\|\ell(f^*,\cdot)\|_{\mathcal H}}$ (orange area).}\label{figure1}
  \vspace{-1.5em}
\end{wrapfigure}

\subsection{Strengths and Limitations Compared with ERM} \label{subsec: geometric interpretation}
Our generalization bound in Theorem \ref{thm: MMD} in fact still utilizes a uniformity argument, hidden in the distance construction. Note that our bound comprises essentially the product of $\|\ell(f^*,\cdot)\|_{\mathcal H}$, which depends on the true loss function, and $\eta$, the size of the ambiguity set. The latter is controlled by $D_\text{MMD}(P, \hat{P})$, which is equivalent to the supremum of the empirical process indexed by $\mathcal{H}_1:=\{h \mid h\in \mathcal{H}, \|h\|_\mathcal{H}\leq 1 \}$. Note that this supremum is expressed through $\sup_{z \in \mathcal{Z}}\sqrt{ k(z, z) }$, which is independent of the hypothesis class.

To understand this further, Figure~\ref{figure1} provides geometric representations on the classes of functions over which ERM and our DRO apply uniform bounds. From this we also argue how DRO relies less heavily on the hypothesis class. In ERM, the uniform bound is taken directly over the hypothesis class, say $\mathcal L$. In DRO, from our discussion above, the uniform bound is taken over a dilation of the $\mathcal H_1$-ball by a factor $\|\ell(f^*,\cdot)\|_{\mathcal H}$, equivalently $\mathcal H_{\|\ell(f^*,\cdot)\|_{\mathcal H}}:=\{h \mid h\in \mathcal{H}, \|h\|_\mathcal{H}\leq \|\ell(f^*,\cdot)\|_{\mathcal H} \}$. Thus, if two hypothesis classes contain the same true loss function $\ell(f^*,\cdot)$, their DRO generalization bounds would be the same, while ERM could differ. In this sense, DRO generalization exhibits less reliance on the hypothesis class. 

With the above understanding, we can see several situations where DRO could outperform ERM: (i) \emph{When $\|\ell(f^*, \cdot)\|_\mathcal{H}$ is small compared to the size of $\mathcal{L}$. } Though one may argue that $\ell(f^*,\cdot)$ is typically unknown and hence we ultimately resort to using uniformity over $\mathcal L$ to obtain any computable bound, our bound in terms of $\ell(f^*,\cdot)$ explains why DRO can sometimes substantially outperform ERM. In Section~\ref{sec:numerical_experiments}, we provide an example  where $\|l(f^*, \cdot)\|_\mathcal{H}=0$ and the excess risk of DRO achieves zero with high probability. (ii) \emph{When uniformly bounding over $\mathcal{H}_1$ is ``cheap''.} Many commonly used kernel spaces are bounded and therefore uniform bounds over $\mathcal{H}_1$ are trivial to establish (by Proposition~\ref{prop: concentration of mmd}), which in turn leads to generalization bounds that depend only on a dilation (by $\|\ell(f^*,\cdot)\|_{\mathcal H}$) of these uniform bounds over $\mathcal{H}_1$. This is regardless of how complex the hypothesis class $\mathcal{L}$ is, including when $\|\mathcal{L}\|_{\mathcal{H}}=\infty$ or $\mathcal{L} = \mathcal{H}$. ERM, in contrast, uses uniformity over $\mathcal{L}$ and can result in an exploding generalization bound when $\|\mathcal{L}\|_{\mathcal{H}}=\infty$. (iii) \emph{When $\mathcal{L}$ resembles the geometry of $\mathcal{H}$.} For instance, when $\mathcal{L} = \mathcal{H}_B:= \{h\mid h\in \mathcal{H}, \| h \|_\mathcal{H} \leq B\}$, the ERM bound yields $2 B \sup_{h\in \mathcal{H}_1} \{Ph - \hat{P}h\}$ since $B = \sup_{\ell \in \mathcal{L}} \| \ell(\cdot) \|_\mathcal{H}$, while the DRO bound gives $2 \| \ell(f^*,\cdot)\|_\mathcal{H} \sup_{h\in \mathcal{H}_1}\{ Ph - \hat{P}h\}$ that depends only on the true loss function and is smaller.  

Despite the strengths, our bound in Theorem~\ref{thm: MMD} also reveals situations where DRO could underperform ERM: When $\mathcal{L}$ has a shape deviating from the RKHS ball such that $\mathcal L\subset \mathcal H_{\|\ell(f^*,\cdot)\|_{\mathcal H}}$, then taking uniform bounds over $\mathcal H_{\|\ell(f^*,\cdot)\|_{\mathcal H}}$ is costly compared to over $\mathcal L$, in which case DRO generalization bound is worse than ERM. Moreover, note that $\ell(f^*, \cdot)$, the optimal loss within $\mathcal{L}$, can shift when $\mathcal{L}$ becomes richer. In this sense Theorem~\ref{thm: MMD} cannot completely remove the dependency on the hypothesis class, but rather diminishes it to the minimum possible: The resulting bound does not rely on any other candidates in the hypothesis class except $\ell(f^*, \cdot)$.

\begin{remark}
There exists another line of work that establishes generalization bounds with a faster rate $O(n^{-1})$ for ERM~\cite{mendelson2003performance, bartlett2005local,  bartlett2006convexity, wainwright2019high} and $\phi$-divergence DRO~\cite{duchi2019variance}. However, they are all based on additional curvature conditions on $\mathcal{F}$ or $\mathcal{L}$ to relate the loss/risk and variance. Improving our bounds towards $O(n^{-1})$ rate is potentially achievable and constitutes future work.
\end{remark}


\subsection{Comparison with \cite{staib2019distributionally}}
For bounds that do not utilize conventional complexity measures, to the best of our knowledge, they are still dependent on other candidates in the hypothesis class; recall our discussion in Section~\ref{sec: intro}. As a concrete example, we compare our results with \cite{staib2019distributionally}, which also studies generalization related to MMD DRO and appears the most relevant to our paper. First, different from our research goal, \cite{staib2019distributionally} aims to use MMD DRO as an analysis tool to derive uniform bounds for $\mathbb{E}_{P} [\ell (f, z)]-\mathbb{E}_{\hat{P}}[\ell(f, z)],~\forall f \in \mathcal{F}$. More specifically, the main argument for their Theorem 4.2 is that, with high probability, one has
$\mathbb{E}_{P} [\ell (f, z)]-\mathbb{E}_{\hat{P}} [\ell (f, z)]
    =O(\| \ell ( f, \cdot) \|_{\mathcal{H}} \sqrt{\frac{\log(1/\delta)}{n}})$
for any given $f$. Although this bound seems to depend on a single loss function $\ell ( f, \cdot)$, any algorithmic output, say $f_{\text{data}}$, needs to be learnt from data and is thus random. This means that to construct a generalization bound, one has to either average over $f_{\text{data}}$ or take a supremem over $f\in \mathcal{F}$. \cite{staib2019distributionally} adopts the latter approach and establishes a bound 
$\mathbb{E}_{P} [\ell (f, z)]- \mathbb{E}_{\hat{P}} [\ell (f, z)]
    =O(\sup_{f\in \mathcal{F}}\| \ell ( f, \cdot) \|_{\mathcal{H}} \sqrt{\frac{\log(1/\delta)}{n}}),~\forall~f\in \mathcal{F}$
in their Theorem 4.2. Our Theorem~\ref{thm: MMD}, in contrast, only depends on $\|\ell (f^*, \cdot)\|_{\mathcal{H}}$ which is potentially much tighter and provides a distinct perspective.

Second, our Theorem~\ref{thm: MMD} also distinguishes from \cite{staib2019distributionally} in that our bound applies specifically to the MMD DRO solution $f^*_{\text{M-dro}}$ and reveals a unique property of this solution. This should be contrasted to \cite{staib2019distributionally} whose result is a uniform bound for any $f\in\mathcal F$, without any specialization to the DRO solution. This also relates to our next discussion.

\subsection{Comparison with RKHS Regularized ERM}
We also compare our results to RKHS regularized ERM:
\begin{equation} \label{eq: rkhs regularized erm}
    \min_{f\in \mathcal{F}}\ell (f, z) + \eta \| f \|_{\mathcal{H}}.
\end{equation}
When we additionally assume $\mathcal{L} \subset \mathcal{H}$, it is known that MMD DRO is equivalent to 
\begin{equation} \label{eq: mmd dro regression form}
\min_{f\in \mathcal{F}}\ell (f, z) + \eta \| \ell ( f, \cdot) \|_{\mathcal{H}}
\end{equation}
according to \cite[Theorem 3.1]{staib2019distributionally}. Although \eqref{eq: rkhs regularized erm} and~\eqref{eq: mmd dro regression form} appear similar, their rationales are different. When regularizing $\|f\|_\mathcal{H}$, the goal is to maintain the simplicity of $f$. In comparison, through \eqref{eq: mmd dro regression form} we are in disguise conducting DRO and utilizing its variability regularization property (Recall Section \ref{sec:variability}). This observation motivates~\cite{staib2019distributionally} to consider regularizing $\ell(f,\cdot)$ instead of $f$ itself. Our Theorem~\ref{thm: MMD} reveals the benefit of regularizing $\ell(f,\cdot)$ at a new level: Regularizing $\|\ell(f, \cdot)\|_\mathcal{H}$ increases the likelihood of  $\mathbb{E}_P[\ell(f^*_{\text{M-dro}},z)]- \max_{Q\in \mathcal{K}}\mathbb{E}_Q[\ell(f^*_{\text{M-dro}},z)]\leq0$, which is argued by looking at the \emph{distribution} space, instead of the loss function space, more specifically from the likelihood of a DRO ambiguity set covering $P$. Because of this, we can establish generalization bounds that depend on the hypothesis class only through the true loss $\ell(f^*, \cdot)$ for RKHS-regularized ERM when the regularization is on $\ell(f,\cdot)$. Furthermore, thanks to this we can relax the assumption $\mathcal{L}\subset \mathcal{H}$ adopted in \eqref{eq: mmd dro regression form} to merely $\ell (f^*, \cdot) \in \mathcal{H}$ which is all we need in MMD DRO. Lastly, we point out that our analysis framework is not confined only to MMD, but can be applied to other distances in defining DRO (See Section~\ref{sec: wasserstein and chi square}).


\subsection{Extension to $\ell (f^*, \cdot)\notin \mathcal{H}$}\label{subsec: extensions}

In Theorem \ref{thm: MMD}, we assume
(i) $ \sup_{x \in \mathcal{X}}\sqrt{ k(x, x) } \leq K$; 
(ii) $\ell (f^*, \cdot)\in \mathcal{H}$ with $ \Vert \ell (f^*, \cdot) \Vert_{ \mathcal{H}} \leq M$.
Although (i) is natural for many kernels, (ii) can be restrictive since many popular loss functions do not satisfy $\ell (f^*, \cdot) \in \mathcal{H}$. For instance, if $k$ is a Gaussian kernel defined on $ \mathcal{Z} \subset \mathbb{R}^{d}$ that has nonempty interior, then any nonzero polynomial on $ \mathcal{Z}$ does not belong to the induced RKHS $\mathcal{H}$ \citep{minh2010some}. This is in fact a common and well-known problem for almost all kernel methods \citep{yu2012analysis}, though many theoretical kernel method papers just assume the concerned function $g\in \mathcal{H}$ for simplicity, e.g., \cite{cortes2008sample, kanamori2012statistical, staib2019distributionally}. This theory-practice gap motivates us to extend our result to $\ell (f^*, \cdot)\notin \mathcal{H}$.

To this end, note that under universal kernels~\citep{steinwart2001influence}, any bounded continuous function can be approximated arbitrarily well (in $L^\infty$ norm) by functions in $\mathcal{H}$. However, the RKHS norm of the approximating function, which appears in the final generalization bound, may grow arbitrarily large as the approximation becomes finer \citep{cucker2007learning, yu2012analysis}. This therefore requires analyzing a trade-off between the function approximation error and RKHS norm magnitude. More specifically, we define the approximation rate 
$I(\ell (f^*, \cdot), R): = \inf_{\Vert g \Vert_{ \mathcal{H}} \leq R}
	\Vert \ell ( f^*, \cdot) - g \Vert_{\infty}$,
and $g_R: = \arg\inf _{\|g\|_{ \mathcal{H}} \leq R}\|\ell (f^*, \cdot)-g\|_\infty$.\footnote{$g_R$ is well-defined since for fixed $R$, $h(g) = \Vert \ell (f^*, \cdot) - g\Vert_{\infty}$ is a continuous function over a compact set $\{g: g\in \mathcal{H}, \Vert g \Vert_{\mathcal{H}} \leq R\}.$}
As a concrete example, we adopt the Gaussian kernel and the Sobolev space to quantify such a rate and establish Theorem \ref{thm: extension} below.
\begin{theorem}[Generalization Bounds for Sobolev Loss Functions]\label{thm: extension}
Let $( \mathcal{H}, \Vert \cdot \Vert_{ \mathcal{H}})$ be the RKHS induced by $k(z, z') = \exp(- \Vert z - z' \Vert^2_2/ \sigma^2)$ on  $ \mathcal{Z} = [0, 1]^d$. Suppose that $\ell (f^*, \cdot)$ is in the Sobolev space $H^d( \mathbb{R}^d )$. Then, there exists a constant $C$ independent of $R$ but dependent on $\ell (f^*, \cdot)$, $d$, and $\sigma$ such that for all $\delta \geq 0$, if we choose ball size 
$\eta = \frac{1}{ \sqrt{n } }(1 + \sqrt{2\log(1/\delta)})$ for MMD DRO~\eqref{eq: mmd dro}, then MMD DRO solution $f^*_{\text{M-dro}}$ satisfies
\begin{equation}\label{eq: extension result}
    	\mathbb{E}_P [\ell (f^*_{\text{M-dro}}, z)] - \mathbb{E}_P [\ell(f^*, z)] \leq\\
	2\inf_{R \geq 1}
	\Big\{ C (\log R)^{-\frac{d}{16}}
	 + \frac{R}{ \sqrt{n } }(1 + \sqrt{2\log(1/\delta)})\Big\}
\end{equation} 
with probability at least $ 1- \delta$.
\end{theorem}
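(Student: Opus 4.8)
The plan is to combine the general DRO bound (Theorem \ref{thm: general dro}) with a bias--variance style decomposition in which the "bias" is the $L^\infty$-approximation error of $\ell(f^*,\cdot)$ by RKHS functions of bounded norm, and the "variance" is the concentration of the kernel mean embedding controlled exactly as in Theorem \ref{thm: MMD}. First, I would fix $R\geq 1$ and write $g_R\in\mathcal H$ with $\|g_R\|_{\mathcal H}\leq R$ and $\|\ell(f^*,\cdot)-g_R\|_\infty\leq I(\ell(f^*,\cdot),R)$. The key observation is that for \emph{any} probability measures $Q_1,Q_2$, $|\mathbb E_{Q_1}[\ell(f^*,z)]-\mathbb E_{Q_2}[\ell(f^*,z)]|\leq |\mathbb E_{Q_1}[g_R]-\mathbb E_{Q_2}[g_R]| + 2\|\ell(f^*,\cdot)-g_R\|_\infty$, since replacing $\ell(f^*,\cdot)$ by $g_R$ costs at most the sup-norm error under each measure. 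Applying this on the event $\{P\in\mathcal K\}$ with $Q_1$ the DRO worst case for $f^*$ and $Q_2=P$, and then running the RKHS inner-product argument from \eqref{eq: sketch of mmd dro proof} but with $g_R$ in place of $\ell(f^*,\cdot)$, gives
\begin{equation*}
\max_{Q\in\mathcal K}\mathbb E_Q[\ell(f^*,z)]-\mathbb E_P[\ell(f^*,z)]\leq 2\eta\,\|g_R\|_{\mathcal H} + 2\,I(\ell(f^*,\cdot),R)\leq 2\eta R + 2\,I(\ell(f^*,\cdot),R)
\end{equation*}
on $\{D_{\text{MMD}}(P,\hat P)\leq\eta\}$. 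Next I would invoke the KME concentration (Proposition \ref{prop: concentration of mmd}) with the bounded Gaussian kernel, for which $\sup_z\sqrt{k(z,z)}=1$, so that the choice $\eta=\frac{1}{\sqrt n}(1+\sqrt{2\log(1/\delta)})$ makes $\mathbb P[D_{\text{MMD}}(P,\hat P)>\eta]\leq\delta$. Plugging into Theorem \ref{thm: general dro} yields, with probability at least $1-\delta$,
\begin{equation*}
\mathbb E_P[\ell(f^*_{\text{M-dro}},z)]-\mathbb E_P[\ell(f^*,z)]\leq 2\,I(\ell(f^*,\cdot),R) + \frac{2R}{\sqrt n}(1+\sqrt{2\log(1/\delta)}),
\end{equation*}
and since $R\geq 1$ was arbitrary I would take the infimum over $R$.

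The remaining step, and the technical heart of the proof, is to control the approximation rate $I(\ell(f^*,\cdot),R)$ for the Gaussian kernel when $\ell(f^*,\cdot)\in H^d(\mathbb R^d)$: I need a bound of the form $I(\ell(f^*,\cdot),R)\leq C(\log R)^{-d/16}$ for $R$ large. This is where I would lean on known results on Gaussian-RKHS approximation of Sobolev functions. The approach is to build the approximant by convolving (an extension of) $\ell(f^*,\cdot)$ with a scaled Gaussian and expanding the result in the Gaussian RKHS; the RKHS norm of such a mollified function blows up like $\exp(c/t)$ as the bandwidth parameter $t\to 0$, while the $L^\infty$ error decays polynomially in $t$ (at a rate governed by the Sobolev smoothness $d$ on $\mathbb R^d$). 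Inverting the relation $R\sim\exp(c/t)$, i.e. $t\sim 1/\log R$, converts the polynomial-in-$t$ error bound into the stated logarithmic-in-$R$ rate; tracking the exponents through the Sobolev embedding and the precise form of the Gaussian approximation bound produces the exponent $d/16$ and the constant $C$ (depending on $\ell(f^*,\cdot)$, $d$, $\sigma$ but not $R$). I would cite the relevant estimates (e.g. from the cited literature on interpolation/approximation in Gaussian RKHS and on Sobolev embeddings) rather than re-derive them.

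The main obstacle is precisely this last quantitative approximation estimate: establishing the explicit trade-off curve between $\|g_R\|_{\mathcal H}$ and $\|\ell(f^*,\cdot)-g_R\|_\infty$ with the correct logarithmic rate requires care about how the Sobolev regularity interacts with the exponentially ill-conditioned Gaussian kernel, and about boundary effects from restricting to $\mathcal Z=[0,1]^d$ (which I would handle by a bounded Sobolev extension of $\ell(f^*,\cdot)$ to $\mathbb R^d$ before mollifying). Everything else — the decomposition, the inner-product bound, and the concentration step — is a routine adaptation of the proof of Theorem \ref{thm: MMD}.
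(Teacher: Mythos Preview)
Your proposal is correct and follows essentially the same approach as the paper: the same approximation-plus-concentration decomposition (replacing $\ell(f^*,\cdot)$ by $g_R$ at cost $2\|\ell(f^*,\cdot)-g_R\|_\infty$, then bounding the $g_R$ term via the RKHS inner-product argument to get $2\eta R$), the same choice of $\eta$ with $K=1$, and the same infimum over $R$. The only cosmetic difference is that the paper simply cites the approximation rate $I(\ell(f^*,\cdot),R)\leq C(\log R)^{-d/16}$ from \cite[Theorem~6.1]{cucker2007learning} rather than sketching its proof via mollification as you do.
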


We briefly outline our proof, leaving the details to Appendix \ref{sec: proof of extension}. When $\ell (f^*, \cdot)\notin \mathcal{H}$, we cannot use KME to rewrite expectation as inner products as shown in \eqref{eq: sketch of mmd dro proof}. Rather, we will approximate $\ell (f^*, \cdot)$ by some $g_R\in \mathcal{H}$ and obtain
$\mathbb{E}_P [\ell(f^*, z)] \leq  \mathbb{E}_{P}[g_R(z)] + I(\ell (f^*, \cdot), R).$
Since $g_R\in \mathcal{H}$, similar to \eqref{eq: sketch of mmd dro proof} we can derive a generalization bound of $g_R$, which appears in the second term of \eqref{eq: extension result}. Then, it suffices to take infinum over $R\geq 1$ to establish the aforementioned trade-off between the approximation error $I(\ell (f^*, \cdot), R)$ (the first term of \eqref{eq: extension result})  and the RKHS norm $\|g_R\|_{\mathcal{H}}\leq R$ (the second term of \eqref{eq: extension result}).  

Compared to Theorem \ref{thm: MMD}, here we have to pay an extra price on the solution quality when the loss function is less compatible with the chosen RKHS, i.e., $\ell (f^*, \cdot)\notin \mathcal{H}$. Concretely, in the setting of Theorem~\ref{thm: extension},  $I(\ell (f^*, \cdot), R)  \leq C (\log R)^{- d/ 16}$ \citep{cucker2007learning} and this results in a worse convergence rate in $n$. For example, inserting $R =\sqrt{n}(\log n)^{-d/16}$ into  \eqref{eq: extension result} yields 
$\mathcal{O}(\max\{C, \sqrt{\log(1/\delta)}\} (\log n)^{-16/d} ).$ 
This convergence rate is rather slow. However, it mainly results from the slow approximation rate $I(\ell (f^*, \cdot), R)$ established by non-parametric analysis. If we further utilize the structure of $\ell (f^*, \cdot)$, then a faster convergence rate or even close to $O(n^{-1/2})$ rate can be potentially achieved. Nonetheless, our main message in Theorem~\ref{thm: extension} is that even if $\ell (f^*, \cdot) \notin \mathcal{H}$, our bound~\eqref{eq: extension result} still depends minimally on the hypothesis class $\mathcal{L}$: The bound depends solely on the true loss function $\ell (f^*, \cdot)$, independent of any other $\ell(f, \cdot)\in\mathcal{L}$.

\section{Specialization to 1-Wasserstein and $\chi^2$-divergence DRO}\label{sec: wasserstein and chi square}
Our framework can also be specialized to DROs using other statistical distances. Below, we derive results for DROs based on the 1-Wasserstein distance and $\chi^2$-divergence, and discuss their implications and comparisons with MMD DRO. Due to page limit, we delegate more detailed discussions to Appendix \ref{subsec: extension}. Let $f^*_\text{W-dro}$ be the solution to 1-Wasserstein DRO (See Appendix \ref{subsec: 1-wasserstein proof}) and $f^*_\text{$\chi^2$-dro}$ be the solution to $\chi^2$-divergence DRO (See Appendix \ref{subsec: phi-divergence}). Also denote $\|\cdot\|_\text{Lip}$ as the Lipschitz norm. Then, we have the following generalization bounds.
\begin{theorem}[Generalization Bounds for 1-Wasserstein DRO]\label{thm: Wasserstein}
Suppose that $P$ is a light-tailed distribution in the sense that there exists $a>1$ such that
$A:=\mathbb{E}_{P}[\exp \left(\|z\|^{a}\right)]<\infty.$
Then, there exists constants $c_1$ and $c_2$ that only depends on $a, A,$ and $d$ such that for any given $0<\delta <1$, if we choose ball size $\eta = \big(\frac{\log (c_1/\delta)} {c_{2} n}\big)^{1 / \max \{d, 2\}}$ and $n \geq \frac{\log (c_{1}/\delta)}{c_{2}}$, then $f^*_\text{W-dro}$ satisfies
$\mathbb{E}_P [\ell (f^*_{\text{W-dro}}, z)] - \mathbb{E}_P [\ell(f^*, z)]
	\leq 2\| \ell(f^*, \cdot)\|_\text{Lip}
	(\frac{\log (c_1/\delta)} {c_{2} n})^{1 / \max \{d, 2\}}$
with probability at least $ 1- \delta$.
\end{theorem}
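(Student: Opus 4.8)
The plan is to invoke the general bound in Theorem~\ref{thm: general dro} and control its two right-hand-side terms separately for the 1-Wasserstein ambiguity set $\mathcal{K} = \{Q : W_1(Q, \hat{P}) \leq \eta\}$. Writing $L := \|\ell(f^*, \cdot)\|_{\text{Lip}}$ (implicitly finite, otherwise the asserted bound is vacuous), the goal is to show that with the stated choice of $\eta$ one has $\mathbb{P}[P \notin \mathcal{K}] \leq \delta$ and, conditionally on $P \in \mathcal{K}$, the quantity $\max_{Q \in \mathcal{K}} \mathbb{E}_Q[\ell(f^*, z)] - \mathbb{E}_P[\ell(f^*, z)]$ is deterministically at most $2L\eta$. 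Plugging $\varepsilon = 2L\eta$ into~\eqref{eq: general bound} then makes the conditional term vanish and leaves $\mathbb{P}[\,\mathbb{E}_P\ell(f^*_{\text{W-dro}},z) - \mathbb{E}_P\ell(f^*,z) > 2L\eta\,] \leq \delta$, which is the claim after substituting the value of $\eta$.

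For the first term, I would use the measure-concentration inequality for the empirical 1-Wasserstein distance under a light tail (Fournier--Guillin; see also its use in data-driven Wasserstein DRO): under $A = \mathbb{E}_P[\exp(\|z\|^a)] < \infty$ there exist constants $c_1, c_2$ depending only on $a, A, d$ with $\mathbb{P}[W_1(\hat{P}, P) > \epsilon] \leq c_1 \exp(-c_2 n\, \epsilon^{\max\{d,2\}})$ for every $\epsilon \in (0,1]$ and $n \geq 1$. Setting the right-hand side equal to $\delta$ and solving for $\epsilon$ gives exactly $\eta = (\log(c_1/\delta)/(c_2 n))^{1/\max\{d,2\}}$, and the hypothesis $n \geq \log(c_1/\delta)/c_2$ is precisely what guarantees $\eta \leq 1$, so that the inequality applies. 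Hence $\mathbb{P}[P \notin \mathcal{K}] = \mathbb{P}[W_1(\hat{P}, P) > \eta] \leq \delta$.

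For the second term, conditional on $\{P \in \mathcal{K}\} = \{W_1(\hat{P}, P) \leq \eta\}$, I would combine Kantorovich--Rubinstein duality with the triangle inequality for $W_1$: for any $Q \in \mathcal{K}$, $\mathbb{E}_Q[\ell(f^*, z)] - \mathbb{E}_P[\ell(f^*, z)] \leq L\, W_1(Q, P) \leq L\,(W_1(Q, \hat{P}) + W_1(\hat{P}, P)) \leq 2L\eta$, where the last step uses $Q \in \mathcal{K}$ and $P \in \mathcal{K}$. Taking the supremum over $Q \in \mathcal{K}$ preserves the bound $2L\eta$, so the conditional probability in~\eqref{eq: general bound} with $\varepsilon = 2L\eta$ equals zero. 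Adding the two contributions yields the stated high-probability bound.

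The main obstacle is not the two-term decomposition --- that is immediate from Theorem~\ref{thm: general dro} --- but rather invoking the measure-concentration result in exactly the form $c_1\exp(-c_2 n\, \epsilon^{\max\{d,2\}})$ with constants depending only on $(a, A, d)$. The sharpest known statement carries a logarithmic correction in the borderline dimension $d = 2$ and an additive tail term; one must check that, after possibly enlarging $c_1$, shrinking $c_2$, and restricting to $\epsilon \leq 1$, these can be absorbed into the clean exponent $\max\{d, 2\}$, and that the resulting constants still depend only on $a, A, d$ and not on $\eta$, $n$, or $\delta$. A secondary point to flag is that Kantorovich--Rubinstein duality requires $\ell(f^*, \cdot)$ to be Lipschitz with finite norm $L$ (which must be assumed, or the bound read as vacuous), and that $f^*_{\text{W-dro}}$ and $f^*$ are measurable selections so that the probability statements are well posed.
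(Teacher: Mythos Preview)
Your proposal is correct and follows essentially the same route as the paper: invoke Theorem~\ref{thm: general dro}, bound $\mathbb{P}[P\notin\mathcal K]$ via the Fournier--Guillin concentration for $W_1(\hat P,P)$ with the stated choice of $\eta$, and bound the conditional term deterministically by $2\|\ell(f^*,\cdot)\|_{\text{Lip}}\eta$ using Kantorovich--Rubinstein duality plus the triangle inequality. Your additional remarks on the $d=2$ log-correction and the implicit Lipschitz assumption are well taken but are not addressed in the paper's own proof.
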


\begin{theorem}[Generalization Bounds for  $\chi^2$-divergence DRO on Discrete Distributions]\label{thm:chi square bound}
Suppose that $P$ is a discrete distribution with $m$ support with $\mathbb{P}[z = z_i] = p_i$. Suppose that (i) $p_{\min} = \min_{1\leq i \leq m} p_i > 0$; (ii) $  \|\ell(f^*, \cdot)\|_\infty < +\infty$. 
Then, for all $0<\delta<1$, if we choose ball size
$\eta = \frac{1}{n}(m +  2\log(4/\delta) + 2 \sqrt{m\log(4/\delta)})$ for  $\chi^2$-divergence DRO, then for all $n \geq \frac{10^6 m^2}{p^3_{\min}\delta^2}$,  $f^*_{\chi^2\text{-dro}}$ satisfies 
$\mathbb{E}_{P} [\ell (f^*_{\chi^2\text{-dro}}, z)] - \mathbb{E}_{P} [\ell(f^*, z)]
	\leq \|\ell(f^*, \cdot)\|_\infty
	\{\frac{1}{\sqrt{n}}\sqrt{m + 2\log(4/\delta) + 2 \sqrt{m\log(4/\delta)}} + 
	\sqrt{\frac{2\log (2/\delta)}{n}}\}$
with probability at least $ 1- \delta$.
\end{theorem}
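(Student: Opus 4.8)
The plan is to instantiate the decomposition of Theorem~\ref{thm: general dro} with the $\chi^2$ ball $\mathcal{K} = \{Q : D_{\chi^2}(Q, \hat{P}) \le \eta\}$, mirroring the MMD argument sketched after Theorem~\ref{thm: MMD}. Theorem~\ref{thm: general dro} reduces bounding the excess risk to controlling two quantities: (A) the probability $\mathbb{P}[P\notin\mathcal{K}] = \mathbb{P}[D_{\chi^2}(P,\hat{P}) > \eta]$ that the ambiguity set misses the truth; and (B) the overshoot $\max_{Q\in\mathcal{K}}\mathbb{E}_Q[\ell(f^*,z)] - \mathbb{E}_P[\ell(f^*,z)]$ on the event $\{P\in\mathcal{K}\}$. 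I would pick $\eta$ exactly as stated so that each of (A) and (B) contributes at most $\delta/2$ to the failure probability, giving the $1-\delta$ confidence.

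For (B) I would split $\max_{Q\in\mathcal{K}}\mathbb{E}_Q[\ell(f^*,z)] - \mathbb{E}_P[\ell(f^*,z)]$ into $\big(\max_{Q\in\mathcal{K}}\mathbb{E}_Q[\ell(f^*,z)] - \mathbb{E}_{\hat{P}}[\ell(f^*,z)]\big) + \big(\mathbb{E}_{\hat{P}}[\ell(f^*,z)] - \mathbb{E}_P[\ell(f^*,z)]\big)$. For the first bracket, use that on a discrete space $D_{\chi^2}(Q,\hat{P}) = \mathrm{Var}_{\hat{P}}[\mathrm{d}Q/\mathrm{d}\hat{P}]$; relaxing the nonnegativity constraint on $Q$ (which only enlarges the maximum) and applying Cauchy--Schwarz with the centering $g - \mathbb{E}_{\hat P}g$ gives, for every feasible $Q$, $\mathbb{E}_Q[g] - \mathbb{E}_{\hat{P}}[g] = \mathbb{E}_{\hat{P}}\big[(\mathrm{d}Q/\mathrm{d}\hat{P} - 1)(g - \mathbb{E}_{\hat{P}}g)\big] \le \sqrt{\eta}\,\sqrt{\mathrm{Var}_{\hat{P}}[g]} \le \sqrt{\eta}\,\Vert g\Vert_\infty$, so the first bracket is at most $\Vert \ell(f^*,\cdot)\Vert_\infty\sqrt{\eta} = \Vert \ell(f^*,\cdot)\Vert_\infty \tfrac{1}{\sqrt{n}}\sqrt{m + 2\log(4/\delta) + 2\sqrt{m\log(4/\delta)}}$ \emph{deterministically}. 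The second bracket is an average of $n$ i.i.d.\ terms lying in an interval of length at most $2\Vert\ell(f^*,\cdot)\Vert_\infty$, so Hoeffding's inequality bounds it by $\Vert\ell(f^*,\cdot)\Vert_\infty\sqrt{2\log(2/\delta)/n}$ with probability at least $1-\delta/2$. Hence, on this event, (B) is at most the right-hand side of the theorem, whether or not $P\in\mathcal{K}$.

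The crux is (A): showing $\mathbb{P}[D_{\chi^2}(P,\hat{P}) > \eta] \le \delta/2$ under $n \ge 10^6 m^2/(p_{\min}^3\delta^2)$. I would compare $D_{\chi^2}(P,\hat{P}) = \sum_i (\hat p_i - p_i)^2/\hat p_i$ with the classical Pearson statistic $T := n\sum_i (\hat p_i - p_i)^2/p_i$, which has mean $m-1$ and is asymptotically $\chi^2_{m-1}$. Two steps. First, a relative Chernoff bound for each $\mathrm{Bin}(n,p_i)$ plus a union bound over the $m$ cells shows that, on an event of probability $\ge 1-\delta/4$, every $\hat p_i$ lies within a multiplicative factor $(1\pm\gamma)$ of $p_i$ with $\gamma=\gamma(n)$ small, whence $D_{\chi^2}(P,\hat{P}) \le (1-\gamma)^{-1}T/n$; the large-$n$ hypothesis is exactly what makes $\gamma$ small enough that the slack $(1-\gamma)^{-1}$ is harmless. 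Second, a \emph{non-asymptotic} upper-tail inequality for $T$ matching the Laurent--Massart $\chi^2$ quantile, $\mathbb{P}\big[T > m + 2\sqrt{m\log(4/\delta)} + 2\log(4/\delta)\big] \le \delta/4$: writing $T = \Vert n^{-1/2}\sum_{j=1}^n \xi_j\Vert_2^2$ with $\xi_j$ i.i.d., mean zero, covariance $I - \sqrt{p}\,\sqrt{p}^{\top}$, and $\Vert\xi_j\Vert_2 \le p_{\min}^{-1/2}$, the standardized cell counts become nearly sub-Gaussian with unit variance proxy once $n$ is large, so $T$ behaves like $\chi^2_{m-1}$ up to a Gaussian-comparison (Berry--Esseen-type) error that is a power of $m/\sqrt{n p_{\min}}$; the hypothesis $n \ge 10^6 m^2/(p_{\min}^3\delta^2)$ drives this error below $\delta/4$. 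Intersecting the two events gives $\mathbb{P}[P\notin\mathcal{K}] \le \delta/2$.

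Feeding (A) and (B) into the intermediate bound in the proof of Theorem~\ref{thm: general dro}, namely $\mathbb{P}[\text{excess risk} > \varepsilon] \le \mathbb{P}[P\notin\mathcal{K}] + \mathbb{P}[\max_{Q\in\mathcal{K}}\mathbb{E}_Q\ell(f^*) - \mathbb{E}_P\ell(f^*) > \varepsilon,\, P\in\mathcal{K}]$, with $\varepsilon$ set to the claimed right-hand side yields $\mathbb{P}[\text{excess risk} > \varepsilon] \le \delta/2 + \delta/2 = \delta$. I expect the main obstacle to be step (A): handling the random denominators $\hat p_i$ and, more seriously, upgrading the asymptotic $\chi^2_{m-1}$ behavior of the Pearson statistic into a finite-sample tail bound whose leading constant is \emph{exactly} one --- anything looser than the Laurent--Massart quantile would inflate $\eta$, hence the final rate, beyond the target $\Vert\ell(f^*,\cdot)\Vert_\infty\sqrt{m/n}$. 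This is precisely why the theorem pays the polynomial-but-steep price $n \gtrsim m^2/(p_{\min}^3\delta^2)$: it is the regime in which the multinomial vector is Gaussian enough for the $\chi^2$ quantile to hold non-asymptotically, while the DRO-versus-truth and empirical-versus-truth fluctuations remain lower order.
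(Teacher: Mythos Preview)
Your plan is essentially the paper's: route through the general DRO inequality, bound the DRO-versus-baseline overshoot by $\sqrt{\eta}\,\|\ell(f^*,\cdot)\|_\infty$ via Cauchy--Schwarz, control $\hat P$ versus $P$ by Hoeffding, and handle coverage $\mathbb{P}[P\notin\mathcal K]$ by a finite-sample comparison of the empirical $\chi^2$-statistic with $\chi^2_{m-1}$ plus the Laurent--Massart tail. The paper packages the first two pieces through the intermediate Theorem~\ref{thm:general divergence dro} rather than Theorem~\ref{thm: general dro} directly, but that is cosmetic.

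Two execution differences are worth knowing. First, the paper's $\chi^2$-divergence (Definition~\ref{definition: chi-square-divergence}) is the Neyman form $\int (L-1)^2/L\,\mathrm d\hat P$, not the Pearson variance $\mathrm{Var}_{\hat P}[L]$ you assume. Under this definition one computes directly $D_{\chi^2}(P\|\hat P)=\sum_i(\hat p_i-p_i)^2/p_i$, i.e.\ exactly $T/n$ with \emph{deterministic} denominators --- so your random-denominator step (the Chernoff/union-bound argument to replace $\hat p_i$ by $p_i$) is unnecessary. Correspondingly, the paper's Cauchy--Schwarz splits $(L-1)\ell = \big((L-1)/\sqrt L\big)\cdot\big(\sqrt L\,\ell\big)$, yielding $\sqrt{D_{\chi^2}(Q\|\hat P)}\sqrt{\mathbb E_Q[\ell^2]}\le \sqrt{\eta}\,\|\ell\|_\infty$; your centered Cauchy--Schwarz would give the same bound under the Pearson definition but not under the paper's. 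Second, for the crux you correctly identify --- upgrading the asymptotic $\chi^2_{m-1}$ law of $T$ to a finite-sample tail with leading constant one --- the paper does not derive this from scratch but invokes \cite[{(4.2)}]{gaunt2017chi}, which gives $\mathbb P[D_{\chi^2}(P\|\hat P)>\eta]\le \mathbb P[\chi^2_{m-1}>n\eta]+250m/(\sqrt n\,p_{\min}^{3/2})$; the hypothesis $n\ge 10^6 m^2/(p_{\min}^3\delta^2)$ is precisely what makes the additive Berry--Esseen term at most $\delta/4$. So your diagnosis of the obstacle and of why the steep $n$-threshold appears is exactly right; the paper just imports the needed inequality rather than reproving it.
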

Like Theorem \ref{thm: MMD}, Theorems \ref{thm: Wasserstein} and \ref{thm:chi square bound} are obtained by analyzing the two components in Theorem \ref{thm: general dro}, which correspond to: (1) the ambiguity set contains true distribution $P$ with high confidence and (2) compatibility of the statistical distance with $\ell(f^*, \cdot)$. 
The bounds in Theorems \ref{thm: Wasserstein} and \ref{thm:chi square bound} involve only $\ell(f^*,\cdot)$ but not other candidates in the hypothesis class. Notice that if $P$ is continuous, then using $\chi^2$-divergence, and more generally $\phi$-divergence  (See Definition \ref{definition: phi-divergence}), for DRO entails a bit more intricacies than MMD and Wasserstein DRO. Due to the absolute continuity requirement in the definition, if $P$ is continuous, then $\phi$-divergence ball centered at $\hat P$ will never cover the truth, which violates the conditions required in developing our generalization bounds. In Theorem \ref{thm:chi square bound}, to showcase how generalization bounds of $\phi$-divergence DRO depend on the hypothesis class, we present an explicit bound for discrete $P$ and $\chi^2$-divergence, for which we can use $\hat P$-centered ambiguity set. To remedy for continuous $P$, one can set the ball center using kernel density estimate or a parametric distribution, albeit with the price of additional approximation errors coming from the ``smoothing'' of the empirical distribution. In Appendix \ref{subsec: phi-divergence}, we present a general bound (Theorem \ref{thm:general divergence dro}) for $\phi$-divergence DRO that leaves open the choice of the ball center and the function $\phi$. Once again, this generalization bound involves only $\ell(f^*,\cdot)$ but not other candidates in the hypothesis class. However, we would leave the explicit bounds of other cases than that of Theorem \ref{thm:chi square bound}, including the choice and usage of the remedial smoothing distributions in the continuous case, to separate future work.

\begin{figure}[!b]
\begin{center}
\vspace{-1.75em}
\subfigure[Performance of ERM and DRO under varying $n$]
{\includegraphics[width=0.485\columnwidth]{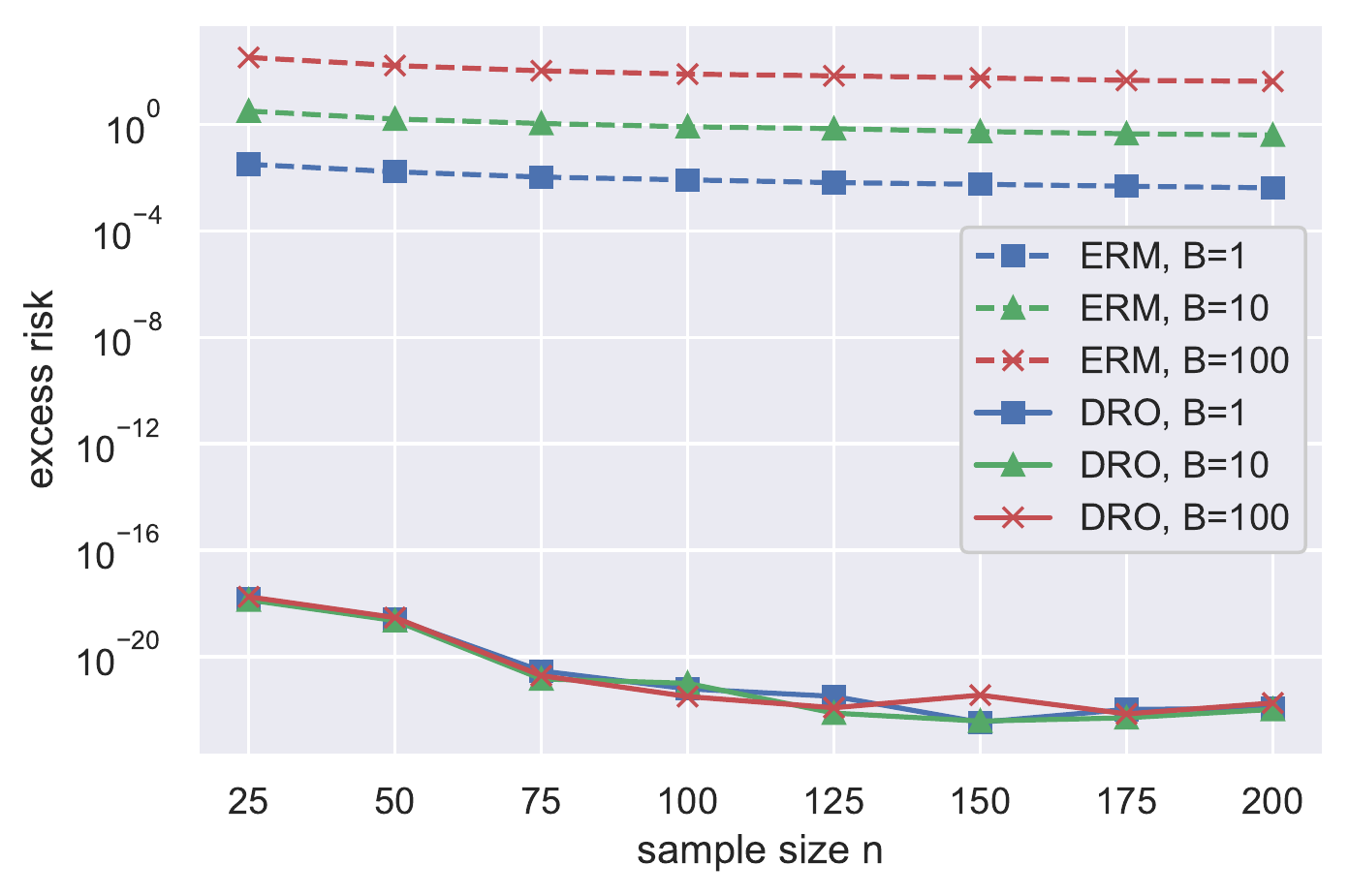} }
\subfigure[Performance of ERM and DRO under varying $\eta$]
{\includegraphics[width=0.485\columnwidth]{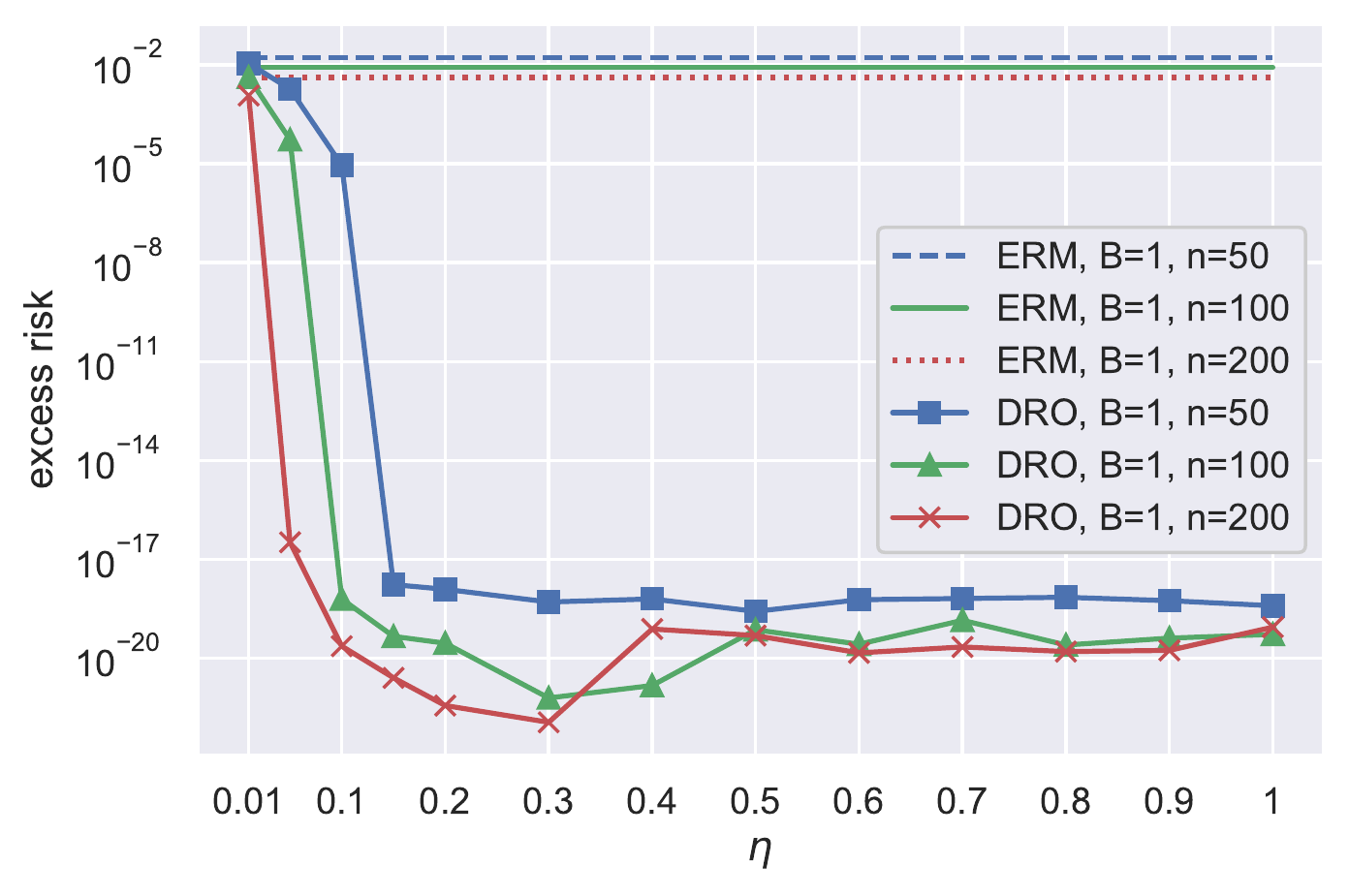}}
\vspace{-0.75em}
\caption{Excess risks of ERM and DRO}
\label{fig}
\end{center}
\vspace{-2em}
\end{figure}

Moreover, in our result for Wasserstein DRO, the rates in terms of $n$ is $n^{-1/\max\{d,2\}}$, which is slower than our result for MMD DRO in Theorem~\ref{thm: MMD} and the classical ERM rate $n^{-1/2}$. The slow rate of Wasserstein DRO comes from the large ball size needed to confidently cover the true $P$ and control the first term of \eqref{eq: general bound}. Specifically, the ball size is controlled by the 1-Wasserstein distance $D_{W_1}(P, \hat{P})$, which is equivalent to the supremum of the empirical process indexed by $\text{Lip}_1 : = \{ h\mid \| h \|_\text{Lip} \leq 1\}$  by the dual representation of $D_{W_1}(\cdot, \cdot)$~\cite{kantorovich1958space}. Similar to our discussion in Section~\ref{subsec: geometric interpretation}, Theorem~\ref{thm: Wasserstein} shows a tradeoff between the hypothesis class dependence and the rate in $n$ that differs from ERM. More precisely, Wasserstein DRO has a generalization that favorably relies only on the true loss $\ell(f^*,\cdot)$, but scales inferiorly in terms of $n$ (due to uniformity over $\text{Lip}_1$). It is thus most suited when the complexity of hypothesis class outweighs the convergence rate consideration. In Appendix \ref{sec: distributional shift}, we will describe the distribution shift setting where such a tradeoff can be amplified.

We also note that, unlike Wasserstein DRO, $\chi^2$-divergence DRO retains a rate of $1/\sqrt n$. However, this is because Theorem \ref{thm:chi square bound} has focused on the finite-support case, where the number of support points $m$ comes into play in the multiplicative factor instead of the rate. If we consider continuous $P$ and use a smooth baseline distribution (instead of $\hat{P}$), we expect the rate of $\chi^2$-divergence DRO to deteriorate much like the case of Wasserstein DRO and there may exist a similar tradeoff.

\section{Numerical Experiments}%
\label{sec:numerical_experiments}

We conduct simple experiments to study the numerical behaviors of our MMD DRO and compare with ERM on simulated data, which serves to  illustrate the potential of MMD DRO in improving generalization and validate our developed theory. We adapt the experiment setups from \cite[Section 5.2]{duchi2019variance} and consider a quadratic loss with linear perturbation: 
$l (\theta, z) = \frac{1}{2}\Vert\theta - v\Vert_2^2 + z^\top(\theta - v),$
where $z \sim \operatorname{Unif}[-B, B]^d $ with constant $B$ varying from $\{1, 10, 100\}$ in the experiment. Here, $\mathcal{F} = \{\theta \mid \theta\in \Theta\}$ and we use $f$ and $\theta$ interchangeably. $v$ is chosen uniformly from $[1/2, 1]^d$ and is known to the learner in advance. 
In applying MMD DRO, we use Gaussian kernel $k(z, z')=\exp (-\|z-z'\|_{2}^{2}/\sigma^{2})$ with $\sigma$ set to the median of $\{\|z_{i}-z_{j}\|_{2}  \mid \forall~i, j\}$ according to the well-known median heuristic \citep{gretton2012kernel}. To solve MMD DRO, we adopt the semi-infinite dual program and the constraint sampling approach from \cite{zhu2020kernel}, where we uniformly sample a constraint set of size equal to the data size $n$. 
The sampled program is then solved by CVX~\citep{grant2014cvx} and MOSEK~\citep{mosek}.  Each experiment takes the average performance of 500 independent trials. Our computational environment is a Mac mini with Apple M1 chip, 8 GB RAM and all algorithms are implemented in Python 3.8.3. 

In Figure \ref{fig}(a), we present the excess risks of ERM and DRO, i.e., $\mathbb{E}_{P} [\ell (f_\text{data}, z)] -  \mathbb{E}_P [\ell(f^*, z)]$, where $f_\text{data}$ denotes the ERM or DRO solution. We set $d= 5$ and tune the ball size via the best among 
$\eta\in\{0.01, 0.05, 0.1, 0.15, 0.2, 0.3, \ldots, 1.0\}$
(more details to be discussed on Figure \ref{fig}(b) momentarily). 
Figure \ref{fig}(a) shows that, as $n$ increases, both DRO and ERM incur a decreasing loss. More importantly, DRO appears to perform much better than ERM.  For $n = 200$ and $B = 100$ for instance, the expected loss of DRO  is less than $10^{-20}$, yet that of ERM remains at around $10^{1}$. 

We attribute the outperformance of DRO to its dependency on the hypothesis class only through the true loss function $\ell(f^*, \cdot)$. This can be supported by an illustration on the effect of  ball size $\eta$. In Figure~\ref{fig}(b), we present excess risk $\mathbb{E}_P [\ell(f_{\text{data}}, z)] -  \mathbb{E}_P [\ell(f^*, z)]$ for $B = 1, d = 5, n = 50$, and varying $\eta$.  We see that the excess risk of the DRO solution drops sharply when $\eta$ is small, and then for sufficiently big $\eta$, i.e., $\eta\in [0.5, 1]$, it remains at a fixed level of $10^{-19}$, which is close to the machine accuracy of zero (such an ``L''-shape excess loss also occurs under other choices of sample size $n$, and we leave out those results to avoid redundancy). Such a phenomenon coincides with our theorems. First, note that $f^* = v$ and the optimal loss function satisfies $\Vert \ell (f^*, \cdot) \Vert_{ \mathcal{H}} = \Vert 0 \Vert_{ \mathcal{H}} = 0$, so that our assumptions in Theorem \ref{thm: MMD} are satisfied. 
Recall that by Theorem~\ref{thm: general dro},
\begin{equation*}
    \mathbb{P}[\mathbb{E}_P \ell (f^*_{\text{M-dro}}, z) - \mathbb{E}_P \ell(f^*, z) > \varepsilon] \leq \mathbb{P}[P \notin \mathcal{K}]
	+ \mathbb{P}[
	\max_{Q\in \mathcal{K}}\mathbb{E}_Q \ell (f^*, z) - \mathbb{E}_P \ell(f^*, z) > \varepsilon| P \in \mathcal{K}].
\end{equation*} 
Conditioning on $P  \in \mathcal{K}$, we have $f^*_{\text{M-dro}} = f^* = v$ and $\max_{Q\in \mathcal{K}} \mathbb{E}_Q[\ell (f^*_{\text{M-dro}}, z)] = \mathbb{E}_P [\ell(f^*, z)] = 0$. Thus, the above inequality yields
$\mathbb{P}[\mathbb{E}_P [\ell (f^*_{\text{M-dro}}, z)] - \mathbb{E}_P [\ell(f^*, z)] > \varepsilon] \leq\mathbb{P}[P \notin \mathcal{K}].$
The proof of Theorem~\ref{thm: MMD} reveals that $\forall~\delta>0$,  if  the ball size is big enough, namely 
$\eta \geq \frac{K}{\sqrt{n}}(1 + \sqrt{2 \log(1/\delta)}),$
then excess risk will vanish, i.e., 
$\mathbb{E}_P [\ell (f^*_{\text{M-dro}}, z)] - \mathbb{E}_P [\ell(f^*, z)] =0$
with probability  $\geq 1- \delta$. This behavior shows up precisely in Figure~\ref{fig}(b).

Our discussion above is not to suggest the choice of $\eta$ in practice. Rather, it serves as a conceptual proof for the correctness of our theorem. In fact, to the best of our knowledge, none of the previous results, including the multiple approaches to explain the generalization of DRO reviewed in Section \ref{sec:literature}, can explain the strong outperformance of DRO with the ``L''-shape phenomenon in this example. These previous results focus on either absolute bounds (Section \ref{sec:absolute}) or using variability regularization (Section \ref{sec:variability}) whose bounds, when converted to excess risks, typically increase in $\eta$ when $\eta$ is sufficiently large and, moreover, involve complexity measures on the hypothesis class (e.g., the $\phi$-divergence case in \cite[Theorems 3, 6]{duchi2019variance}  and the Wasserstein case in \cite[Corollaries 2, 4]{gao2020finite}). Our developments in Section \ref{sec: general results} thus appear to provide a unique explanation for the significant outperformance of DRO in this example. 

\begin{ack}
We gratefully acknowledge support from the National Science Foundation under grants CAREER CMMI-1834710 and IIS-1849280. We also thank Nathan Kallus and Jia-Jie Zhu for the greatly helpful discussion that helps improve this paper.



\end{ack}

\bibliography{ref}


\newpage
\appendix

\section{Review on Kernel Mean Embedding}

We first review the definition of kernel mean embedding (KME).
\begin{definition}[Kernel Mean Embedding]
Given a distribution $Q$, and a positive definite kernel $k: \mathcal{Z} \times \mathcal{Z}  \rightarrow \mathbb{R}$, i.e., given $n \in \mathbb{N}, a_{1}, \ldots, a_{n} \in \mathbb{R}$,
$$\sum_{i=1}^{n} \sum_{j=1}^{n} a_{i} a_{j} k\left(z_{i}, z_{j}\right) \geq 0$$
holds for any $z_{1}, \ldots, z_{n} \in \mathcal{Z} ,$  the KME of $Q$ is defined by: 
$$\mu_{Q}: = \int k(z, \cdot) Q(\mathrm{d} z).$$
\end{definition}

We have the following property.
\begin{proposition}\label{prop_def_kme} \citep{smola2007hilbert}
For a positive definite kernel $k$ defined on compact space $ \mathcal{Z} \times \mathcal{Z}$, let $ \mathcal{H}$ be the induced RKHS. If $ \mathbb{E}_{P}[k(z, z)] < + \infty$, then $\mu_P \in \mathcal{H}$ for all $Q\in \mathcal{P}$.
\end{proposition}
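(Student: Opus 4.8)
The plan is to realize $\mu_Q$ as the Riesz representer in $\mathcal{H}$ of the expectation functional $h\mapsto\mathbb{E}_Q[h(z)]$, and then to identify this abstract representer with the integral $\int k(z,\cdot)\,Q(\mathrm{d}z)$ by evaluating at points. First I would set up the functional $T_Q:\mathcal{H}\to\mathbb{R}$ by $T_Q(h):=\int h\,\mathrm{d}Q=\mathbb{E}_Q[h(z)]$ and check it is well-defined: every $h\in\mathcal{H}$ is measurable (and, since $\mathcal{Z}$ is compact and $k$ continuous, in fact bounded and continuous on $\mathcal{Z}$), so the integral exists. Note also that continuity of $k$ on the compact set $\mathcal{Z}\times\mathcal{Z}$ makes $\sup_{z}k(z,z)<\infty$, so the hypothesis $\mathbb{E}_Q[k(z,z)]<\infty$ holds for every $Q\in\mathcal{P}$, which is why the conclusion is stated for all $Q$.

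The key step is to show $T_Q$ is a bounded linear functional. Linearity is immediate. For boundedness, use the reproducing property $h(z)=\langle h,k(z,\cdot)\rangle_{\mathcal{H}}$ together with Cauchy--Schwarz to get $|h(z)|\le\|h\|_{\mathcal{H}}\,\|k(z,\cdot)\|_{\mathcal{H}}=\|h\|_{\mathcal{H}}\sqrt{k(z,z)}$, and hence
\[
|T_Q(h)|\le\mathbb{E}_Q\big[|h(z)|\big]\le\|h\|_{\mathcal{H}}\,\mathbb{E}_Q\big[\sqrt{k(z,z)}\big]\le\|h\|_{\mathcal{H}}\sqrt{\mathbb{E}_Q[k(z,z)]},
\]
where the last inequality is Jensen's. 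Thus $\|T_Q\|\le\sqrt{\mathbb{E}_Q[k(z,z)]}<\infty$. By the Riesz representation theorem there is a unique $\mu_Q\in\mathcal{H}$ with $T_Q(h)=\langle h,\mu_Q\rangle_{\mathcal{H}}$ for all $h\in\mathcal{H}$; in particular $\mu_Q\in\mathcal{H}$, which is the claim.

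Finally I would identify $\mu_Q$ with the stated integral. Fixing $z'\in\mathcal{Z}$ and taking $h=k(z',\cdot)\in\mathcal{H}$ gives $\mu_Q(z')=\langle k(z',\cdot),\mu_Q\rangle_{\mathcal{H}}=T_Q(k(z',\cdot))=\int k(z',z)\,Q(\mathrm{d}z)$, so $\mu_Q$ agrees pointwise with $z'\mapsto\int k(z',z)\,Q(\mathrm{d}z)$, justifying the notation $\mu_Q=\int k(z,\cdot)\,Q(\mathrm{d}z)$. (An alternative route is to verify directly that $z\mapsto k(z,\cdot)$ is Bochner integrable into $\mathcal{H}$, since it is measurable with $\int\|k(z,\cdot)\|_{\mathcal{H}}\,Q(\mathrm{d}z)=\int\sqrt{k(z,z)}\,Q(\mathrm{d}z)<\infty$, and then check its Bochner integral coincides with $\mu_Q$; I would prefer the Riesz argument to avoid developing vector-valued integration.) I do not anticipate a genuine obstacle here: the only points requiring care are the measurability and integrability needed to define $T_Q$ — both immediate on a compact domain with a continuous kernel — and the identification of the Riesz representer with the integral of feature maps, which the pointwise evaluation above settles cleanly.
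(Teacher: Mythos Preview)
Your proof is correct and is the standard Riesz representation argument for the existence of kernel mean embeddings. Note, however, that the paper does not supply its own proof of this proposition: it is quoted as a known result from \cite{smola2007hilbert}, so there is no in-paper proof to compare against. Your approach is precisely the one used in that reference (bound $|T_Q(h)|$ via the reproducing property and Cauchy--Schwarz, then invoke Riesz), so there is nothing to add.
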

It is easy to verify that for a bounded kernel, $\mathbb{E}_{Q}[k(z, z)] < + \infty$ for all distribution $Q$. Therefore, as a corollary of Proposition \ref{prop_def_kme}, for any bounded kernel, KME is well-defined in RKHS, i.e., $\mu_P \in \mathcal{H}$.

The following proposition provides the properties of KME that we utilize in Section \ref{sec: mmd results}. Specifically, Proposition \ref{prop_kme} (i) establishes the equivalence between MMD and the norm distance in KME. Proposition \ref{prop_kme} (ii) bridges the expectation to inner product in RKHS in Eq.~\eqref{eq: expectation to inner product}.
\begin{equation}\label{eq: expectation to inner product}
\begin{aligned}
    \mathbb{E}_{x\sim P}[f(x)]
    = \int_{x} \langle f, k(x, \cdot) \rangle \mathrm{d} P(x)
    = \langle f, \mu_P \rangle.
\end{aligned}
\end{equation}
\begin{proposition}\label{prop_kme}
Assume the condition in Proposition \ref{prop_def_kme} for the existence of the kernel mean embeddings $\mu_{Q_1}, \mu_{Q_2}$ is satisfied. Then,
\begin{enumerate}[(i)]
    \item \citep{borgwardt2006integrating, gretton2012kernel} $D_\text{MMD}(Q_1, Q_2) = \Vert \mu_{Q_1} - \mu_{Q_2} \Vert_{\mathcal{H}}$;
    \item \citep{smola2007hilbert} $\mathbb{E}_{Q_1}[f(z)]=\langle f, \mu_{Q_1} \rangle_{\mathcal{H}}.$
\end{enumerate}
\end{proposition}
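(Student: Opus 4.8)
The plan is to prove part (ii) first and then obtain part (i) from it using Cauchy--Schwarz. Throughout, the condition of Proposition \ref{prop_def_kme} is in force, so $\mu_{Q_1}$ and $\mu_{Q_2}$ are well-defined elements of $\mathcal{H}$; concretely, $z\mapsto k(z,\cdot)$ is Bochner-integrable with respect to each $Q_i$ because $\mathbb{E}_{Q_i}\|k(z,\cdot)\|_{\mathcal{H}} = \mathbb{E}_{Q_i}\sqrt{k(z,z)}\le \sqrt{\mathbb{E}_{Q_i}[k(z,z)]}<\infty$.

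For part (ii), I would start from the reproducing property $f(z) = \langle f, k(z,\cdot)\rangle_{\mathcal{H}}$, valid for every $f\in\mathcal{H}$ and $z\in\mathcal{Z}$. Taking expectations against $Q_1$ gives $\mathbb{E}_{Q_1}[f(z)] = \mathbb{E}_{Q_1}[\langle f, k(z,\cdot)\rangle_{\mathcal{H}}]$, so the whole content is the interchange of the expectation with the inner product, $\mathbb{E}_{Q_1}[\langle f, k(z,\cdot)\rangle_{\mathcal{H}}] = \langle f, \mathbb{E}_{Q_1}[k(z,\cdot)]\rangle_{\mathcal{H}} = \langle f, \mu_{Q_1}\rangle_{\mathcal{H}}$. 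The clean justification is that a bounded linear functional --- here $\langle f, \cdot\rangle_{\mathcal{H}}$ --- commutes with a Bochner integral, which applies since $\mu_{Q_1}$ is exactly the Bochner integral $\int k(z,\cdot)\,\mathrm{d}Q_1(z)$. An elementary alternative that avoids Bochner integration: the identity holds for $f$ of the form $\sum_j a_j k(z_j,\cdot)$ by linearity and the reproducing property; such $f$ are dense in $\mathcal{H}$; and both sides are $\mathcal{H}$-continuous in $f$ (the left side because $|\mathbb{E}_{Q_1}[f(z)]|\le \|f\|_{\mathcal{H}}\,\mathbb{E}_{Q_1}\sqrt{k(z,z)}$, the right side by Cauchy--Schwarz), so the identity extends to all of $\mathcal{H}$ by passing to the limit.

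For part (i), I would substitute part (ii) into the integral-probability-metric definition $D_\text{MMD}(Q_1,Q_2) = \sup_{\|h\|_{\mathcal{H}}\le 1}\bigl(\int h\,\mathrm{d}Q_1 - \int h\,\mathrm{d}Q_2\bigr)$. By part (ii), the expression inside the supremum equals $\langle h, \mu_{Q_1}\rangle_{\mathcal{H}} - \langle h, \mu_{Q_2}\rangle_{\mathcal{H}} = \langle h, \mu_{Q_1} - \mu_{Q_2}\rangle_{\mathcal{H}}$. Cauchy--Schwarz bounds this by $\|\mu_{Q_1}-\mu_{Q_2}\|_{\mathcal{H}}$ over the unit ball, and the bound is attained at $h = (\mu_{Q_1}-\mu_{Q_2})/\|\mu_{Q_1}-\mu_{Q_2}\|_{\mathcal{H}}$ when $\mu_{Q_1}\ne\mu_{Q_2}$ (the case $\mu_{Q_1}=\mu_{Q_2}$ being trivial), giving $D_\text{MMD}(Q_1,Q_2) = \|\mu_{Q_1}-\mu_{Q_2}\|_{\mathcal{H}}$.

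The only delicate point, and hence the step I expect to be the main obstacle, is the rigorous interchange of expectation and inner product in part (ii): one must verify strong measurability of $z\mapsto k(z,\cdot)$ (which follows from continuity of $k$ and separability of $\mathcal{H}$ over the compact $\mathcal{Z}$) together with its Bochner-integrability, before invoking that bounded linear maps pass through the Bochner integral --- or, in the elementary route, discharge the density-and-continuity argument. Everything else reduces to the reproducing property and Cauchy--Schwarz.
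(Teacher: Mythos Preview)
Your proof is correct and follows the standard route (reproducing property plus Bochner integrability for (ii), then the IPM definition and Cauchy--Schwarz with attainment for (i)). Note, however, that the paper does not actually supply its own proof of this proposition: it is stated as a cited fact from \cite{borgwardt2006integrating,gretton2012kernel,smola2007hilbert}, so there is no in-paper argument to compare against --- your write-up simply fills in what those references contain.
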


Finally, we review the result in \cite{cucker2007learning} used in Section~\ref{subsec: extensions}.

\begin{theorem}\citep[Theorem 6.1]{cucker2007learning}
Let $( \mathcal{H}, \Vert \cdot \Vert_{ \mathcal{H}})$ be the RKHS induced by $k(z, z') = \exp(- \Vert z - z' \Vert^2_2/ \sigma^2)$ on  $ \mathcal{Z} = [0, 1]^d$. Suppose that $\ell (f^*, \cdot)$ is in the Sobolev space $H^d( \mathbb{R}^d )$. Then, there exists a constant $C$ independent on $R$ but dependent on $f$, $d$, and $\sigma$ such that
\begin{equation*}
	\inf _{\|g\|_{ \mathcal{H}} \leq R}\|\ell (f^*, \cdot)-g\|_{\infty} 
	\leq C (\log R)^{-\frac{d}{16}},~\forall~ R \geq 1.
\end{equation*}
\end{theorem}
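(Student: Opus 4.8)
The plan is to prove this approximation rate by Fourier analysis, exploiting the explicit spectral descriptions of both the Gaussian RKHS and the Sobolev space. Writing $\phi(u) = \exp(-\|u\|_2^2/\sigma^2)$ so that $k(z,z') = \phi(z-z')$, I would use the standard fact that for a translation-invariant kernel the RKHS norm on $\mathbb{R}^d$ is
\begin{equation*}
\|g\|_{\mathcal{H}(\mathbb{R}^d)}^2 = \frac{1}{(2\pi)^d}\int_{\mathbb{R}^d}\frac{|\widehat{g}(\omega)|^2}{\widehat{\phi}(\omega)}\,\mathrm{d}\omega,
\qquad
\widehat{\phi}(\omega) = (\sigma\sqrt{\pi})^d\,\exp\!\big(-\tfrac{\sigma^2}{4}\|\omega\|_2^2\big),
\end{equation*}
alongside $\|f\|_{H^d}^2 = \int (1+\|\omega\|_2^2)^d\,|\widehat{f}(\omega)|^2\,\mathrm{d}\omega$ for $f := \ell(f^*,\cdot)$. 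Since the Sobolev order $d$ exceeds $d/2$, the embedding $H^d(\mathbb{R}^d)\hookrightarrow C(\mathbb{R}^d)$ makes $f$ continuous and bounded with $\widehat f\in L^1$, so Fourier inversion holds pointwise and all $L^\infty$ quantities are meaningful.

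I would then construct the approximant by low-pass truncation: for a threshold $T>0$ set $\widehat{g_T} := \widehat{f}\cdot\mathbf{1}\{\|\omega\|_2\le T\}$, which is Hermitian-symmetric (hence $g_T$ real) and compactly supported in frequency, so that $g_T\in\mathcal{H}(\mathbb{R}^d)$. Two competing estimates drive the argument. Bounding the exploding Gaussian weight $1/\widehat\phi$ over the truncation ball gives a norm budget
\begin{equation*}
\|g_T\|_{\mathcal{H}(\mathbb{R}^d)}^2 \;\le\; C_\sigma\,\exp\!\big(\tfrac{\sigma^2}{4}T^2\big)\,\|f\|_{L^2}^2,
\end{equation*}
so respecting $\|g_T\|_{\mathcal{H}}\le R$ forces $T\asymp\sqrt{\log R}$; while Fourier inversion together with Cauchy--Schwarz against the Sobolev weight controls the tail,
\begin{equation*}
\|f-g_T\|_{\infty}\le \frac{1}{(2\pi)^d}\int_{\|\omega\|_2>T}|\widehat{f}(\omega)|\,\mathrm{d}\omega
\le C_d\,\|f\|_{H^d}\Big(\int_{\|\omega\|_2>T}(1+\|\omega\|_2^2)^{-d}\,\mathrm{d}\omega\Big)^{1/2}\lesssim \|f\|_{H^d}\,T^{-d/2}.
\end{equation*}
Substituting $T\asymp\sqrt{\log R}$ into the tail estimate already yields a logarithmic rate of the form $\|f-g_T\|_\infty\lesssim\|f\|_{H^d}(\log R)^{-cd}$; reproducing the precise exponent $d/16$ of \citep{cucker2007learning} is a matter of following their coarser chain of inequalities rather than a new idea. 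Finally I would transfer from $\mathbb{R}^d$ to $\mathcal{Z}=[0,1]^d$: domain restriction only decreases the infimal-extension norm, so $\|g_T|_{[0,1]^d}\|_{\mathcal{H}}\le\|g_T\|_{\mathcal{H}(\mathbb{R}^d)}\le R$, and trivially $\|f-g_T\|_{L^\infty([0,1]^d)}\le\|f-g_T\|_{L^\infty(\mathbb{R}^d)}$, so both the budget and the error survive restriction; taking $g = g_T|_{[0,1]^d}$ closes the argument.

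The crux, and the main obstacle, is the severe tension between the two displays: the Gaussian RKHS penalizes frequency $\omega$ by the super-polynomial factor $\exp(\sigma^2\|\omega\|_2^2/4)$, so keeping the norm below $R$ caps the usable bandwidth at only $T\asymp\sqrt{\log R}$. This logarithmic ceiling on the admissible bandwidth is exactly what degrades the final rate from polynomial to merely logarithmic in $R$, \emph{regardless} of how smooth $f$ is; the Sobolev order $d$ enters only through the power of $T$ in the tail bound. The hard part is therefore conceptual rather than computational — recognizing that no choice of approximant can do better than logarithmically because of the native-space geometry — after which pinning down the exponent $-d/16$ reduces to bookkeeping of the constants relating $T$, $R$, and the Sobolev smoothness.
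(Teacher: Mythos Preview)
The paper does not prove this statement at all: it is quoted verbatim as \cite[Theorem~6.1]{cucker2007learning} and used as a black box in the proof of Theorem~\ref{thm: extension}, so there is no ``paper's own proof'' to compare against. Your Fourier-analytic sketch is a legitimate self-contained argument for the cited result, and it follows the standard route for approximation rates in translation-invariant RKHSs: low-pass truncation in frequency, a budget constraint from the exponentially growing weight $1/\widehat\phi$, and a Sobolev tail bound via Cauchy--Schwarz. All the pieces you invoke (the spectral RKHS norm, the embedding $H^d\hookrightarrow C\cap L^1(\widehat{\cdot})$, Hermitian symmetry of the truncation, and the norm-decreasing restriction to $[0,1]^d$) are correct and correctly assembled.

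One remark worth making explicit: your computation actually delivers $\|f-g_T\|_\infty\lesssim(\log R)^{-d/4}$, which is strictly sharper than the stated exponent $-d/16$. You acknowledge this and attribute the gap to ``coarser chain of inequalities'' in the cited source, which is fair; since the theorem only claims $(\log R)^{-d/16}$, your sharper estimate a fortiori implies it, and no further bookkeeping is needed to match the exponent. The only place to be slightly careful is the phrase ``$\ell(f^*,\cdot)\in H^d(\mathbb{R}^d)$'' when the loss is a priori a function on $[0,1]^d$: you are tacitly fixing an $H^d(\mathbb{R}^d)$ extension and working with it throughout, which is consistent with how the hypothesis is phrased and how \cite{cucker2007learning} proceeds.
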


\section{Proof of Theorem~\ref{thm: MMD}}\label{sec: proof of mmd dro}
Starting from the bound \eqref{eq: general bound} in Theorem \ref{thm: general dro}, we separate our proof into two parts. 
For $ \mathbb{P}[P \notin \mathcal{K}]$, we set 
$$\eta = \frac{K}{\sqrt{n}}( 1+ \sqrt{2 \log(1/\delta)}),$$ noting that:
\begin{proposition} \label{prop: concentration of mmd} \citep[Proposition A.1]{tolstikhin2017minimax}  
Let $k: \mathcal{Z} \times \mathcal{Z} \rightarrow \mathbb{R}$ be a continuous positive definite kernel on nonempty compact set $\mathcal{Z}\subset \mathbb{R}^d$ with $\sup _{z \in \mathcal{Z}} \sqrt{k(z, z)} \leq K <\infty$. Then for any $\delta \in(0,1)$ with probability at least $1-\delta$,
\begin{equation*}
\left\|\mu_{\hat P}-\mu_{P}\right\|_{\mathcal{H}} \leq 
\frac{K}{\sqrt{n}}( 1 + \sqrt{2 \log(1/\delta)}).
\end{equation*}
\end{proposition}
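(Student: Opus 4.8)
This is a standard concentration bound for kernel mean embeddings (it is \cite[Proposition~A.1]{tolstikhin2017minimax}), so the plan is to reproduce the usual argument: view $G:=\|\mu_{\hat P}-\mu_P\|_{\mathcal H}$ as a function of the iid sample $z_1,\dots,z_n$, show it has small bounded differences, control its mean, and apply McDiarmid's inequality. Throughout I would use the three facts that follow from $k$ being a bounded continuous kernel on the compact set $\mathcal Z$ (see Appendix~A): the embeddings $\mu_{\hat P}=\frac1n\sum_{i=1}^n k(z_i,\cdot)$ and $\mu_P$ both lie in $\mathcal H$; $\|k(z,\cdot)\|_{\mathcal H}^2=k(z,z)\le K^2$; and $\mathbb E_{z\sim P}[k(z,\cdot)]=\mu_P$ as an element of $\mathcal H$.

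First I would bound $\mathbb E[G]$. Writing $\mu_{\hat P}-\mu_P=\frac1n\sum_{i=1}^n\xi_i$ with $\xi_i:=k(z_i,\cdot)-\mu_P$ iid and mean zero in $\mathcal H$, independence kills the cross terms and yields $\mathbb E[G^2]=\frac1n\mathbb E\|\xi_1\|_{\mathcal H}^2=\frac1n\big(\mathbb E_P[k(z,z)]-\|\mu_P\|_{\mathcal H}^2\big)\le K^2/n$, hence $\mathbb E[G]\le\sqrt{\mathbb E[G^2]}\le K/\sqrt n$ by Jensen.

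Next I would verify the bounded-differences condition: replacing a single $z_j$ by $z_j'$ changes the vector inside the norm by $\frac1n\big(k(z_j,\cdot)-k(z_j',\cdot)\big)$, whose $\mathcal H$-norm is at most $\frac1n\big(\sqrt{k(z_j,z_j)}+\sqrt{k(z_j',z_j')}\big)\le 2K/n$, so by the reverse triangle inequality $G$ changes by at most $c:=2K/n$ in each coordinate. McDiarmid's inequality then gives $\mathbb P\big(G\ge\mathbb E[G]+t\big)\le\exp\!\big(-2t^2/(nc^2)\big)=\exp\!\big(-nt^2/(2K^2)\big)$; equating the right-hand side to $\delta$ gives $t=K\sqrt{2\log(1/\delta)/n}$, and combining with $\mathbb E[G]\le K/\sqrt n$ produces $G\le\frac{K}{\sqrt n}\big(1+\sqrt{2\log(1/\delta)}\big)$ with probability at least $1-\delta$. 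I do not expect a genuine obstacle here; the only points requiring care are the well-definedness of the embeddings in $\mathcal H$ (so all the $\mathcal H$-norm and inner-product manipulations are licit, guaranteed by boundedness of $k$) and carrying the constant $2K$ — rather than $K$ — through the bounded-differences step, since this is exactly what generates the $\sqrt 2$ inside the final bound.
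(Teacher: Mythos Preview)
Your proof is correct and is precisely the standard McDiarmid-based argument for this concentration bound. Note, however, that the paper does not supply its own proof of this proposition: it simply quotes the result from \cite[Proposition~A.1]{tolstikhin2017minimax} and uses it as a black box, so there is nothing in the paper to compare against beyond observing that your argument is the one underlying the cited reference.
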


For the second term $\mathbb{P}[
	\max_{Q\in \mathcal{K}}\mathbb{E}_Q [\ell (f^*, z)] - \mathbb{E}_P [\ell(f^*, z)] > \varepsilon| P \in \mathcal{K}]$ in \eqref{eq: general bound},
conditional on $P \in \mathcal{K}$ we have that
\begin{equation*}
\begin{aligned}
    \max_{Q\in \mathcal{K}}\mathbb{E}_Q [\ell (f^*, z)] - \mathbb{E}_P [\ell(f^*, z)]
    =
    \max_{Q\in \mathcal{K}} 
    \langle \ell (f^*, \cdot), \mu_{Q} - \mu_P \rangle_{\mathcal{H}},
\end{aligned}
\end{equation*}
where the second line follows from \eqref{eq: expectation to inner product}. Then, we have
\begin{equation*}
\begin{aligned}
    \max_{Q\in \mathcal{K}}\mathbb{E}_Q [\ell (f^*, z)] -  \mathbb{E}_P [\ell(f^*, z)] 
=&~ \max_{Q\in \mathcal{K}} 
    \langle \ell (f^*, \cdot), \mu_{Q} - \mu_P \rangle_{\mathcal{H}}\\
\leq&~  \max_{Q \in \mathcal{K}}
	\Vert \ell (f^*, \cdot)\Vert_{ \mathcal{H}} 
	\Vert \mu_Q - \mu_P \Vert_{ \mathcal{H}}
    \leq 2M \eta,
\end{aligned}
\end{equation*}
where the first inequality follows from the Cauchy-Schwarz inequality, and the last inequality holds since for all $P \in \mathcal{K}$,
\begin{equation*}
\begin{aligned}
    \Vert \mu_{Q} - \mu_{{P}} \Vert_{\mathcal{H}}
	\leq \Vert \mu_Q - \mu_{\hat{P}} \Vert_{ \mathcal{H}}
	+ \Vert \mu_{\hat{P}} - \mu_{P} \Vert_{\mathcal{H}}
	\leq 2 \eta
\end{aligned}
\end{equation*}
by the triangle inequality. Hence, 
\begin{equation*}
\begin{aligned}
    \mathbb{P}[\max_{Q\in \mathcal{K}}\mathbb{E}_Q [\ell (f^*, z)] - \mathbb{E}_P [\ell(f^*, z)] > \varepsilon
	| P \in \mathcal{K}] 
	\leq&~ \mathbb{P}[\max_{P\in \mathcal{K}}
	 \Vert \ell (f^*, \cdot)\Vert_{ \mathcal{H}} 
	 \Vert \mu_Q - \mu_{P} \Vert_{ \mathcal{H}}
	> \varepsilon| P \in \mathcal{K}] = 0.
\end{aligned}
\end{equation*}
provided that $\varepsilon \geq 2M \eta$. Therefore,  with probability at least $1 - \delta$, 
\begin{equation*}
\begin{aligned}
    \mathbb{E}_P [\ell (f^*_{\text{M-dro}}, z)] - \mathbb{E}_P [\ell(f^*, z)] 
    \leq 2M\eta 
    = \frac{2KM}{\sqrt{n}}( 1 + \sqrt{2 \log(1/\delta)}).
\end{aligned}
\end{equation*}
This completes our proof.

\section{Proof of Theorem \ref{thm: extension}}\label{sec: proof of extension}
For notational simplicity, we denote
\begin{equation*}
	g_R: = 	\arg\inf _{\|g\|_{ \mathcal{H}} \leq R}\|\ell (f^*, \cdot)-g\|_{\infty} 
\end{equation*} 
for all $R \geq 1$, where $g_R$ is well-defined since for fixed $R$, $h(g) = \Vert \ell (f^*, \cdot) - g\Vert_{\infty}$ is a continuous function over a compact set $\{g: \Vert g \Vert_{\mathcal{H}} \leq R\}.$
Then, by Theorem \ref{thm: general dro} and Proposition \ref{prop: concentration of mmd}, for any $\delta> 0$, once we set 
$\eta = \frac{1}{\sqrt{n}}( 1 + \sqrt{2 \log(1/\delta)}),$
we have that
\begin{equation*}
    \mathbb{P}[\mathbb{E}_P [\ell (f^*_{\text{M-dro}}, z)] - \mathbb{E}_P [\ell(f^*, z)] > \varepsilon]
	\leq 
	\delta + \mathbb{P}[
	\max_{Q\in \mathcal{K}}\mathbb{E}_Q [\ell (f^*, z)] - \mathbb{E}_P [\ell(f^*, z)] > \varepsilon| P \in \mathcal{K}].
\end{equation*}
By definition, we obtain
\begin{equation*} 
\begin{aligned}
    \max_{Q\in \mathcal{K}}\mathbb{E}_Q [\ell (f^*, z)] -  \mathbb{E}_P [\ell(f^*, z)]
=&~ \max_{Q \in \mathcal{K}}
	\Big\{\int \ell (f^*, z) Q( \mathrm{d} z) 
	- \int \ell (f^*, z) P( \mathrm{d} z)\Big\}\\
\leq &~  \max_{Q \in \mathcal{K}}
	\Big\{\int g_R Q( \mathrm{d} z) 
	- \int g_R P( \mathrm{d} z)
	 \Big\} + 2 \|\ell (f^*, \cdot)-g_R\|_{\infty}\\
\leq &~ 	
	 \max_{Q \in \mathcal{K}} \{\Vert g_R\Vert_{ \mathcal{H}} 
	\Vert \mu_{Q} - \mu_{P} \Vert_{ \mathcal{H}}\}
	+  2 \|\ell (f^*, \cdot)-g_R\|_{\infty} \\
\leq&~ 2( \eta R +  \|\ell (f^*, \cdot)-g_R\|_{\infty}),
\end{aligned}
\end{equation*}
where the second equality follows from the Cauchy-Schwarz inequality. We have
$$\|\ell (f^*, \cdot)-g_R\|_{\infty} = I(\ell (f^*, \cdot) ,R) \leq C (\log R)^{-d/16}$$ according to \cite[Theorem 6.1]{cucker2007learning}   (and see \cite[Proposition 18]{zhou2013density}  for details on the constants).
Then, taking infimum over $R \geq 1$ and combining the equations above yield the desired result.

\section{Extensions to Other DRO Variants} \label{subsec: extension}
We provide more details in applying our framework to Wasserstein and $\phi$-divergence DROs in support of Section \ref{sec: wasserstein and chi square}.


\subsection{1-Wasserstein DRO} \label{subsec: 1-wasserstein proof}
We focus on the 1-Wasserstein distance, which is defined as:
\begin{definition}[1-Wasserstein distance] \label{definition: 1-wasserstein} For two distributions $Q_{1}, Q_{2} \in \mathcal{P}$ on domain $\mathbb R^d$, the 1-Wasserstein distance $D_{W_1}: \mathcal{P} \times\mathcal{P} \rightarrow \mathbb{R}_{+}$ is defined by
\begin{equation*}
D_{W_1}(Q_{1}, Q_{2}):=
\inf \Big\{\int_{\mathcal{Z}^{2}}\left\|z_{1}-z_{2}\right\| \Pi\left(\mathrm{d} z_{1}, \mathrm{~d} z_{2}\right): \begin{array}{l}
\Pi \text { is a joint distribution of } z_{1} \text { and } z_{2} \\ 
\text {with marginals } Q_{1} \text { and } Q_{2} \text {, respectively }\end{array}\Big\},  
\end{equation*}
 where $\|\cdot\|$ is a given norm in $\mathbb{R}^{d}$.
\end{definition}

Similar to \eqref{eq: mmd dro}, we define 1-Wasserstein DRO as
\begin{equation}\label{eq: w dro}
    \min_{f\in \mathcal{F}} \max_{Q: D_{W_1}(Q, \hat{P}) \leq \eta}
    \mathbb{E}_{Q}
    [\ell (f, z)]
\end{equation}
and denote $f^*_\text{W-dro}$ as the solution to \eqref{eq: w dro}. Also define $\|\cdot\|_\text{Lip}$ as the Lipschitz norm.

Recall that in Theorem~\ref{thm: Wasserstein}, we have the following generalization bound:
\begin{equation*}
	\mathbb{E}_P [\ell (f^*_{\text{W-dro}}, z)] - \mathbb{E}_P [\ell(f^*, z)]
	\leq 2\| \ell(f^*, \cdot)\|_\text{Lip}
	\Big(\frac{\log (c_1/\delta)} {c_{2} n}\Big)^{1 / \max \{d, 2\}}.
\end{equation*}

As discussed in Section \ref{sec: wasserstein and chi square}, Theorem \ref{thm: Wasserstein} is obtained via analyzing the two components in Theorem \ref{thm: general dro}, which respectively correspond to: (1) the ambiguity set contains true distribution $P$ with high confidence and (2) compatibility of the Wasserstein distance with $\ell(f^*, \cdot)$. For the first component, we use the concentration of $D_{W_1}(P, \hat{P})$ established in \citep[Theorem 2]{fournier2015rate}:
\begin{lemma} \label{lemma: concentraion of W} \citep[Theorem 2]{fournier2015rate} Suppose that $P$ is a light-tailed distribution in the sense that there exists $a>1$ such that
$$
A:=\mathbb{E}_{P}[\exp \left(\|z\|^{a}\right)]<\infty.
$$
Then, there exists constants $c_1$ and $c_2$ that only depends on $a, A,$ and $d$ such that for all $\delta \geq 0$, if $n \geq \frac{\log (c_{1}/\delta)}{c_{2}}$, then
\begin{equation*}
    D_{W_1}(P, \hat{P}) \leq \Big(\frac{\log (c_1/\delta)} {c_{2} n}\Big)^{1 / \max \{d, 2\}}
\end{equation*}
with probability at least $1-\delta$.
\end{lemma}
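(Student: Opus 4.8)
The plan is to reduce the stated high-probability bound to a sub-exponential \emph{deviation} inequality for $D_{W_1}(P,\hat P)$ and then invert it. Concretely, I would first establish that there exist constants $c_1,c_2$ (depending only on $a,A,d$) with
$$\mathbb{P}[D_{W_1}(P,\hat P) \geq x] \leq c_1 \exp(-c_2 n x^{\max\{d,2\}}), \qquad x \in (0,1],$$
and then set the right-hand side equal to $\delta$. Solving $c_1 \exp(-c_2 n x^{\max\{d,2\}}) = \delta$ gives exactly $x = (\log(c_1/\delta)/(c_2 n))^{1/\max\{d,2\}}$, and the hypothesis $n \geq \log(c_1/\delta)/c_2$ guarantees $x \leq 1$, so the deviation bound is in force at this $x$. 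Hence the entire content lies in proving the deviation inequality, and the passage to the lemma is pure algebra.

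To prove the deviation inequality I would follow the multiscale (dyadic) decomposition underlying \citep{fournier2015rate}. The first step is a deterministic bound controlling $D_{W_1}$ by the discrepancy of the two measures on nested dyadic partitions: writing $\mathcal{P}_\ell$ for the partition of a bounded region into cubes of side $2^{-\ell}$, one has
$$D_{W_1}(P,\hat P) \lesssim \sum_{\ell \geq 0} 2^{-\ell} \sum_{F \in \mathcal{P}_\ell} |P(F) - \hat P(F)|,$$
up to a remainder for the mass outside the bounded region. The spatial tail is where the light-tail hypothesis enters: applying Markov's inequality to $\mathbb{E}_P[\exp(\|z\|^a)] = A$, the mass of $P$ (and, with high probability, of $\hat P$) beyond radius $r$ decays like $A\exp(-r^a)$, so truncating at a slowly growing radius contributes a negligible amount to the transport cost.

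For the bounded part, each summand $|P(F) - \hat P(F)|$ is the deviation of an average of iid $\mathrm{Bernoulli}(P(F))$ variables, which I would control by Bernstein's inequality, $\mathbb{P}[|P(F) - \hat P(F)| \geq t] \leq 2\exp(-n t^2 / (2 P(F) + 2t/3))$. I would then allocate a deviation budget $t_\ell$ to each scale, decreasing in $\ell$ so that the weighted sum $\sum_\ell 2^{-\ell} t_\ell$ stays of order $x$, and take a union bound over the roughly $2^{\ell d}$ cubes at scale $\ell$. Absorbing the $2^{\ell d}$ cube count into the exponent and optimizing the per-scale budgets balances the cube count against the $2^{-\ell}$ transport weight, and it is precisely this balance that produces the exponent $\max\{d,2\}$ (the value $2$ enters because the rate cannot beat the central-limit rate $n^{-1/2}$ in low dimension). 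Summing the per-scale failure probabilities then yields the claimed $c_1 \exp(-c_2 n x^{\max\{d,2\}})$.

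I expect the main obstacle to be the combinatorial bookkeeping in the bounded part: choosing the per-scale budgets $t_\ell$ and the truncation radius so that simultaneously (i) the weighted discrepancy sum remains $O(x)$, (ii) the union bound over exponentially many fine-scale cubes does not degrade the exponent, and (iii) the light-tail remainder stays negligible, all while delivering the clean $\max\{d,2\}$ exponent. The inversion step and the per-cube Bernstein estimates are routine; the delicate part is the multiscale balancing that is the heart of the Fournier--Guillin estimate.
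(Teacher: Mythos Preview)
The paper does not prove this lemma at all: it is quoted verbatim as \citep[Theorem 2]{fournier2015rate} and used as a black box in the proof of Theorem~\ref{thm: Wasserstein}, with no argument supplied. Your proposal therefore goes well beyond what the paper does---you are sketching the Fournier--Guillin proof itself, whereas the paper simply cites it.

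For what it is worth, your sketch is faithful to the structure of the original argument (dyadic multiscale decomposition, per-cube Bernstein, light-tail truncation via the exponential moment, then inversion of the resulting deviation bound), and the inversion step you describe is exactly how the lemma as stated is extracted from the deviation inequality. But since the paper offers no proof to compare against, there is nothing further to contrast.
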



For the second component, we use the bound $$\mathbb{E}_{ Q_1}[\ell(f^*, z)] - \mathbb{E}_{Q_2}[\ell(f^*, z)] \leq \| \ell(f^*, \cdot)\|_\text{Lip} D_{W_1}(Q_1, Q_2)$$
which follows from the dual representation of $D_{W_1}(\cdot, \cdot)$ \cite{esfahani2018data}. With these, a similar analysis as that for Theorem \ref{thm: MMD} yields Theorem \ref{thm: Wasserstein}. The  detailed proof is deferred to Appendix~\ref{sec:proof wasserstein}.


\subsection{$\phi$-divergence DRO} \label{subsec: phi-divergence}
Next we consider the use of $\phi$-divergence, defined as:
\begin{definition}[$\phi$-divergence] \label{definition: phi-divergence} Let $Q_1$ and $Q_2$ be two probability distributions over a space $\Omega$ such that $Q_1 \ll Q_2$, i.e., $Q_1$ is absolutely continuous with respect to $Q_2$. Then, for a convex function $\phi:[0, \infty) \rightarrow(-\infty, \infty]$ such that $\phi(x)$ is finite for all $x>0, \phi(1)=0$, and $\phi(0)=\lim _{t \rightarrow 0^{+}} \phi(t)$ (which could be infinite), the $\phi$-divergence of $Q_1$ from $Q_2$ is defined as
\begin{equation*}
    D_{\phi}(Q_1 \| Q_2) = \int_{\Omega} \phi\Big(\frac{\mathrm{d} Q_1}{\mathrm{d} Q_2}\Big) \mathrm{d} Q_2.
\end{equation*}
\end{definition}

As discussed in Section \ref{sec: wasserstein and chi square}, using the $\phi$-divergence for DRO entails a bit more intricacies than Wasserstein DRO due to the absolute continuity requirement in defining the $\phi$-divergence. In particular, if the true distribution $P$ is continuous, then setting the center of a $\phi$-divergence ball as the empirical distribution $\hat P$ would make the ball never cover the truth, which violates the conditions required in developing our generalization bounds. To remedy, one can set the ball center using kernel density estimate or a parametric distribution, albeit with the price of additional approximation errors coming from the ``smoothing'' of the empirical distribution. In the following, we first present a general bound for $\phi$-divergence DRO that leaves open the choice of the ball center, which we denote $P_0$, and the function $\phi$. We then recall and discuss further Theorem \ref{thm:chi square bound} that gives a more explicit bound for the case of $\chi^2$-divergence and when the true distribution $P$ is finitely discrete, for which we could use the empirical distribution $\hat P$ as the ball center and hence avoid the additional smoothing error. 

Like above, we define $\phi$-divergence DRO as
\begin{equation}\label{eq: phi dro}
    \min_{f\in \mathcal{F}} \max_{Q: D_\phi(Q\|P_0) \leq \eta}
    \mathbb{E}_{Q}
    [\ell (f, z)]
\end{equation}
where $P_0$ is a baseline distribution constructed from iid samples $\{z_i\}_{i=1}^n$. We denote $f^*_\text{$\phi$-dro}$ as the solution to \eqref{eq: phi dro}. We have the following result:


\begin{theorem}[General Generalization Bounds for $\phi$-divergence DRO]\label{thm:general divergence dro} Suppose that for a given $0<\delta<1$, we construct ambiguity set  $\mathcal{K} = \{Q: D_\phi(Q \| P_0) \leq \eta\}$ such that 
\begin{enumerate}[(i)]
    \item (high-confidence ambiguity set) $\mathbb{P}[P \in \mathcal{K}] \geq 1- \delta/2 $;
    \item (bound of DRO above the baseline) 
    $\max\limits_{Q\in \mathcal{K}}\mathbb{E}_{Q} [\ell(f^*, z)] - \mathbb{E}_{P_0} [\ell(f^*, z)] \leq C_1(\ell(f^*, \cdot), \sqrt{\eta}, \phi)$;
    \item (approximation error)
     $\mathbb{E}_{P_0} [\ell(f^*, z) ]- \mathbb{E}_P [\ell(f^*, z)] \leq C_2(\ell(f^*, \cdot), n, \delta)$ with probability at least $1-\delta/2$.
\end{enumerate}
where $C_1$ and $C_2$ are quantities that depend on the respective arguments. Then, the $\phi$-divergence DRO solution $f^*_{\text{$\phi$-dro}}$ satisfies 
\begin{equation*}
	\mathbb{E}_P [\ell (f^*_{\text{$\phi$-dro}}, z)] - \mathbb{E}_P [\ell(f^*, z)]
	\leq C_1(\ell(f^*, \cdot), \sqrt{\eta}, \phi) + C_2(\ell(f^*, \cdot), n, \delta)
\end{equation*}
with probability at least $ 1- \delta$.\label{thm:general divergence}
\end{theorem}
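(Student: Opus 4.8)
The plan is to obtain this as a direct corollary of Theorem~\ref{thm: general dro}, using the baseline distribution $P_0$ as an intermediate reference between the worst case over $\mathcal{K}$ and the truth $P$. First I would apply Theorem~\ref{thm: general dro} verbatim to the ambiguity set $\mathcal{K}=\{Q:D_\phi(Q\|P_0)\le\eta\}$: for every $\varepsilon>0$,
\[
\mathbb{P}[\mathbb{E}_P\ell(f^*_{\text{$\phi$-dro}},z)-\mathbb{E}_P\ell(f^*,z)>\varepsilon]\le \mathbb{P}[P\notin\mathcal{K}]+\mathbb{P}[\textstyle\max_{Q\in\mathcal{K}}\mathbb{E}_Q\ell(f^*,z)-\mathbb{E}_P\ell(f^*,z)>\varepsilon\mid P\in\mathcal{K}].
\]
Assumption (i) immediately bounds the first term by $\delta/2$. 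For the second term, the key step is to insert $\mathbb{E}_{P_0}[\ell(f^*,z)]$ and split $\max_{Q\in\mathcal{K}}\mathbb{E}_Q\ell(f^*,z)-\mathbb{E}_P\ell(f^*,z)$ into the ``DRO above baseline'' gap $\max_{Q\in\mathcal{K}}\mathbb{E}_Q\ell(f^*,z)-\mathbb{E}_{P_0}\ell(f^*,z)$ and the ``approximation'' gap $\mathbb{E}_{P_0}\ell(f^*,z)-\mathbb{E}_P\ell(f^*,z)$, which are controlled by assumptions (ii) and (iii) respectively.

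Next I would carry out the event bookkeeping. Let $E_1=\{P\in\mathcal{K}\}$ and $E_2=\{\mathbb{E}_{P_0}\ell(f^*,z)-\mathbb{E}_P\ell(f^*,z)\le C_2(\ell(f^*,\cdot),n,\delta)\}$; by (i) and (iii), $\mathbb{P}[E_1^c]+\mathbb{P}[E_2^c]\le\delta$, so $\mathbb{P}[E_1\cap E_2]\ge1-\delta$. On $E_1\cap E_2$, the DRO optimality of $f^*_{\text{$\phi$-dro}}$ combined with $P\in\mathcal{K}$ gives, exactly as in the proof of Theorem~\ref{thm: general dro},
\[
\mathbb{E}_P\ell(f^*_{\text{$\phi$-dro}},z)-\mathbb{E}_P\ell(f^*,z)\le\max_{Q\in\mathcal{K}}\mathbb{E}_Q\ell(f^*,z)-\mathbb{E}_P\ell(f^*,z)\le C_1(\ell(f^*,\cdot),\sqrt{\eta},\phi)+C_2(\ell(f^*,\cdot),n,\delta),
\]
where the last inequality uses the baseline split together with (ii) on $E_1$ and the defining inequality of $E_2$. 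Taking $\varepsilon=C_1+C_2$ then yields the stated conclusion with probability at least $1-\delta$.

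Since the argument is essentially a repackaging of Theorem~\ref{thm: general dro}, I do not anticipate a substantive obstacle; the only care required is in the probabilistic bookkeeping. The point to be explicit about is that assumption (ii) is a pathwise (deterministic) property of the constructed set $\mathcal{K}$ and the baseline $P_0$, so it may be invoked on the full event $E_1$ without any further probability loss; consequently the two budgets of $\delta/2$ in (i) and (iii) are exactly what is needed for the final $1-\delta$. A minor secondary point is that the conditioning on $\{P\in\mathcal{K}\}$ appearing in Theorem~\ref{thm: general dro} interacts cleanly with $E_2$: because the bound $C_1+C_2$ holds on the whole intersection $E_1\cap E_2$ irrespective of conditioning, one can equivalently work with the unconditioned joint event and sidestep any conditional-probability subtleties.
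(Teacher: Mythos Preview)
Your proposal is correct and follows essentially the same approach as the paper: both define the events $E_1=\{P\in\mathcal{K}\}$ and $E_2=\{\mathbb{E}_{P_0}\ell(f^*,z)-\mathbb{E}_P\ell(f^*,z)\le C_2\}$, insert the baseline $P_0$ to split $\max_{Q\in\mathcal K}\mathbb{E}_Q\ell(f^*,z)-\mathbb{E}_P\ell(f^*,z)$ into the pieces governed by (ii) and (iii), and apply a union bound on $E_1^c$ and $E_2^c$. Your remark that (ii) is pathwise and thus costs no additional probability is exactly how the paper uses it as well.
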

Theorem \ref{thm:general divergence} is derived by expanding the analysis for Theorem \ref{sec: general results} to incorporate the error between the DRO optimal value over its baseline $P_0$, and also the approximation error induced by $P_0$ over $P$. This additional decomposition facilitates the analysis of specific $\phi$-divergences. The proof is deferred to Appendix~\ref{sec:proof general divergence}. 

Next, we can apply the above to the  $\chi^2$-divergence with $P_0$ set to be the empirical distribution in the finitely discrete case. First we define $\chi^2$-divergence as follows:


\begin{definition}[ $\chi^2$-divergence] \label{definition: chi-square-divergence}
Let $Q_1$ and $Q_2$ be two probability distributions over a space $\Omega$ such that $Q_1 \ll Q_2$, i.e., $Q_1$ is absolutely continuous with respect to $Q_2$. Then, the  $\chi^2$-divergence of $Q_1$ from $Q_2$ is defined as
\begin{equation*}
    D_{\chi^2}(Q_1 \| Q_2) = \int_{\Omega} \Big(\frac{\mathrm{d} Q_1}{\mathrm{d} Q_2} -1 \Big)^2 \Big/ \Big(\frac{\mathrm{d} Q_1}{\mathrm{d} Q_2}\Big) \mathrm{d} Q_2.
\end{equation*}
\end{definition}

With this, we consider  $\chi^2$-divergence DRO
\begin{equation}\label{eq: chi-square dro}
    \min_{f\in \mathcal{F}} \max_{Q: D_{\chi^2}(Q\|\hat P) \leq \eta}
    \mathbb{E}_{Q}
    [\ell (f, z)]
\end{equation}
where we now set the empirical distribution $\hat P$ as the ball center. We denote $f^*_\text{$\chi^2$-dro}$ as the solution to \eqref{eq: chi-square dro}. Now recall the generalization bound in Theorem \ref{thm:chi square bound}:
\begin{equation*}
	\mathbb{E}_{P} [\ell (f^*_{\chi^2\text{-dro}}, z)] - \mathbb{E}_{P}[ \ell(f^*, z)]
	\leq \|\ell(f^*, \cdot)\|_\infty
	\Big\{\frac{1}{\sqrt{n}}\sqrt{m + 2\log(4/\delta) + 2 \sqrt{m\log(4/\delta)}} + 
	\sqrt{\frac{2\log (2/\delta)}{n}}\Big\}
\end{equation*}

Deriving this bound utilizes Theorem \ref{thm:general divergence dro} and the finite-sample approximation error of $\chi^2$-statistics established in \cite{gaunt2017chi}. The detailed proof is deferred to Appendix~\ref{subsec: proof of chi}. Once again, this bound in Theorem \ref{thm:chi square bound} involves only $\ell(f^*,\cdot)$ but not other candidates in the hypothesis class. To consider cases using smoothed $P_0$ such as kernel density estimate and with continuous ground-truth $P$, we would need to derive bounds on their divergence and analyze the approximation error on the expected loss between $P_0$ and $P$. These are both substantial investigation that we delegate to future work.


\section{Extensions to Distributional Shift Setting}\label{sec: distributional shift}

We extend our framework to the setting of distributional shift as follows. We have iid samples from the training distribution $P_\text{train}$. Our testing distribution $P_\text{test}$, however, is different from $P_\text{train}$, i.e., $P_\text{test}\neq P_\text{train}$. Let 
\begin{equation*}
    f^*_\text{test} \in \text{argmin}_{f\in \mathcal{F}} \mathbb{E}_{ P_\text{test}} [\ell(f, z)]
\end{equation*}
be the oracle best solution under $P_\text{test}$. Our goal is derive a data-driven solution $f_\text{data}$ to bound the excess risk for the \emph{testing} objective
\begin{equation}
    \mathbb{E}_{P_\text{test}}
    \ell(f_\text{data}, z)
    - \mathbb{E}_{P_\text{test}}
    \ell(f^*_\text{test}, z).\label{testing excess risk}
\end{equation}

We first provide a general DRO bound as Theorem \ref{thm: general dro} that allows for distribution shift:
\begin{theorem}[A General DRO Bound under Distributional Shift] \label{thm: general dro under shift} 
Let $\mathcal{Z}\subset \mathbb{R}^d$ be a sample space, $P_\text{train}$ be a distribution on $\mathcal{Z}$, $\{z_i\}_{i = 1}^n$ be iid samples from $P_\text{train}$, and $\mathcal{F}$ be the function class. For loss function $\ell: \mathcal{F}\times \mathcal{Z} \rightarrow \mathbb{R}$ and DRO
$\min\limits_{f\in \mathcal{F}} \max\limits_{Q\in \mathcal{K}} \mathbb{E}_{Q} \ell (f, z),$
DRO solution $f^*_{\text{dro}}$ satisfies that for any $ \varepsilon > 0$,
\begin{equation}\label{eq: gen bound in shift}
\begin{aligned}
&~  \mathbb{P}[\mathbb{E}_{P_\text{test}} \ell (f^*_{\text{dro}}, z) - \mathbb{E}_{P_\text{test}} \ell(f^*_\text{test}, z) > \varepsilon]\\
\leq&~
	\mathbb{P}[P_\text{test} \notin \mathcal{K}]
	+ \mathbb{P}[
	\max_{Q\in \mathcal{K}}\mathbb{E}_Q \ell (f^*_\text{test}, z) - \mathbb{E}_{P_\text{test}} \ell(f^*_\text{test}, z) > \varepsilon| P_\text{test} \in \mathcal{K}].
\end{aligned}
\end{equation}
\end{theorem}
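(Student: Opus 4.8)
The plan is to mimic the proof of Theorem~\ref{thm: general dro} almost verbatim, replacing the single true distribution $P$ by the pair $(P_\text{train}, P_\text{test})$ and tracking where each appears. First I would write the testing excess risk as a telescoping sum of three terms,
\begin{equation*}
\mathbb{E}_{P_\text{test}}[\ell(f^*_\text{dro},z)] - \mathbb{E}_{P_\text{test}}[\ell(f^*_\text{test},z)] = A + B + C,
\end{equation*}
where $A = \mathbb{E}_{P_\text{test}}[\ell(f^*_\text{dro},z)] - \max_{Q\in\mathcal{K}}\mathbb{E}_Q[\ell(f^*_\text{dro},z)]$, $B = \max_{Q\in\mathcal{K}}\mathbb{E}_Q[\ell(f^*_\text{dro},z)] - \max_{Q\in\mathcal{K}}\mathbb{E}_Q[\ell(f^*_\text{test},z)]$, and $C = \max_{Q\in\mathcal{K}}\mathbb{E}_Q[\ell(f^*_\text{test},z)] - \mathbb{E}_{P_\text{test}}[\ell(f^*_\text{test},z)]$.

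Second, I would argue $B\le 0$ almost surely: the minimax solution $f^*_\text{dro}$ minimizes $f\mapsto\max_{Q\in\mathcal{K}}\mathbb{E}_Q[\ell(f,z)]$ over $\mathcal{F}$, and since $f^*_\text{test}\in\mathcal{F}$, the worst-case objective evaluated at $f^*_\text{dro}$ cannot exceed its value at $f^*_\text{test}$. Crucially, this step is indifferent to which distribution we ultimately care about, because the adversary ranges over the same data-calibrated set $\mathcal{K}$ in both expressions; thus distribution shift plays no role here. Third, for $A$ I would invoke the worst-case definition: whenever $P_\text{test}\in\mathcal{K}$, we have $\max_{Q\in\mathcal{K}}\mathbb{E}_Q[\ell(f,z)]\ge\mathbb{E}_{P_\text{test}}[\ell(f,z)]$ for every $f\in\mathcal{F}$, in particular for $f=f^*_\text{dro}$, so $A\le 0$ on the event $\{P_\text{test}\in\mathcal{K}\}$. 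Consequently, on $\{P_\text{test}\in\mathcal{K}\}$ the testing excess risk is bounded above by $C$, a quantity involving only the oracle loss $\ell(f^*_\text{test},\cdot)$. Putting the pieces together, the event $\{\text{testing excess risk}>\varepsilon\}$ is contained in $\{P_\text{test}\notin\mathcal{K}\}\cup(\{C>\varepsilon\}\cap\{P_\text{test}\in\mathcal{K}\})$; a union bound together with $\mathbb{P}[\,C>\varepsilon,\,P_\text{test}\in\mathcal{K}\,]\le\mathbb{P}[\,C>\varepsilon\mid P_\text{test}\in\mathcal{K}\,]$ then yields exactly \eqref{eq: gen bound in shift}.

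I do not anticipate a genuine obstacle, since the argument is structurally identical to that of Theorem~\ref{thm: general dro}; the decomposition and the two sign arguments are what drive everything. The only conceptual point worth flagging is that $\mathbb{P}[P_\text{test}\notin\mathcal{K}]$ now measures how well an ambiguity set calibrated from $P_\text{train}$-samples covers the \emph{shifted} distribution $P_\text{test}$, so in any concrete specialization one must enlarge $\mathcal{K}$ to absorb the shift magnitude; that quantitative work, however, lies beyond the stated theorem and I would relegate it to the subsequent specializations.
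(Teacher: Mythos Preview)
Your proposal is correct and matches the paper's own proof, which simply notes that the argument is identical to that of Theorem~\ref{thm: general dro} with $P$ and $f^*$ replaced by $P_\text{test}$ and $f^*_\text{test}$. Your three-term decomposition, the optimality argument for $B\le 0$, and the worst-case argument for $A\le 0$ on $\{P_\text{test}\in\mathcal{K}\}$ are exactly the ingredients used there.
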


\begin{proof}
The proof is identical to that of Theorem \ref{thm: general dro}, except that we put subscript ``$\text{test}$'' in $P$ and $f^*$ there.
\end{proof}

Theorem \ref{thm: general dro under shift}, and its proof that is virtually identical to that of Theorem \ref{thm: general dro}, shows that our main framework in Section \ref{sec: general results} can be naturally employed under distribution shift. In particular, the ambiguity set $\mathcal K$ needs to be constructed from the training set, otherwise there is no restriction on its application. The key now is the choice of ambiguity set size and how it distorts generalization more explicitly when distribution shift happens. Below, we investigate the cases of MMD DRO and 1-Wasserstein DRO whose explicit distribution shift bounds could be readily derived.

First, we define MMD DRO under distribution shift as \begin{equation}\label{eq: mmd dro dist shift}
    \min_{f\in \mathcal{F}} \max_{Q: D_\text{MMD}(Q, \hat{P}_\text{train}) \leq \eta}
    \mathbb{E}_{Q}
    [\ell (f, z)]
\end{equation}
where $\hat{P}_\text{train}$ is the empirical distribution constructed by iid samples $\{z_i\}_{i = 1}^n$ from $P_\text{train}$. We now denote $f^*_\text{M-dro}$ as the solution of \eqref{eq: mmd dro dist shift}.

\begin{theorem}[Generalization Bounds for MMD DRO under Distributional Shift]\label{thm:mmd dro shift}
Let $\mathcal{Z}$ be a compact subspace of $ \mathbb{R}^d $, $k: \mathcal{Z}\times \mathcal{Z} \rightarrow \mathbb{R}$ be a bounded continuous positive definite kernel, and $( \mathcal{H}, \langle \cdot,  \cdot \rangle_{ \mathcal{H}})$ be the corresponding RKHS. Suppose also that
(i) $\sup_{z \in \mathcal{Z}}\sqrt{ k(z, z) } \leq K$;
(ii)  $\ell (f^*_\text{test}, \cdot)\in \mathcal{H}$ with $ \Vert \ell (f^*_\text{test}, \cdot) \Vert_{ \mathcal{H}} < + \infty$; (iii) $D_\text{MMD}(P_\text{train}, P_\text{test}) < +\infty$.
Then, for all $\delta \geq 0$, if we choose ball size
$\eta = D_\text{MMD}(P_\text{train}, P_\text{test}) + \frac{K }{\sqrt{n} } (1 + \sqrt{2\log(1/\delta)})$ for MMD DRO, then MMD DRO solution $f^*_{\text{M-dro}}$ satisfies 
\begin{equation*}
	\mathbb{E}_{P_\text{test}} \ell (f^*_{\text{M-dro}}, z) - \mathbb{E}_{P_\text{test}} \ell(f^*_\text{test}, z)
	\leq 2\Vert \ell (f^*_\text{test}, \cdot) \Vert_{ \mathcal{H}} \Big\{D_\text{MMD}(P_\text{train}, P_\text{test}) + \frac{K }{\sqrt{n} } (1 + \sqrt{2 \log(1/\delta)})\Big\}
\end{equation*}
with probability at least $ 1- \delta$.
\end{theorem}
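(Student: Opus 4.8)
The plan is to instantiate the decomposition of Theorem~\ref{thm: general dro under shift} with the ambiguity set $\mathcal{K} = \{Q : D_{\text{MMD}}(Q, \hat{P}_\text{train}) \le \eta\}$ and the stated choice $\eta = D_{\text{MMD}}(P_\text{train}, P_\text{test}) + \frac{K}{\sqrt{n}}(1 + \sqrt{2\log(1/\delta)})$, and then to bound the two terms on the right-hand side of \eqref{eq: gen bound in shift} separately. The argument parallels the proof of Theorem~\ref{thm: MMD}, with one extra triangle-inequality step inserted to absorb the shift term $D_{\text{MMD}}(P_\text{train}, P_\text{test})$; assumption (iii), the finiteness of this term, is precisely what keeps $\eta$ finite.

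For the first term, I would use that $D_{\text{MMD}}$ is a metric (it equals the KME norm distance, Proposition~\ref{prop_kme}(i)) to write
\[
D_{\text{MMD}}(P_\text{test}, \hat{P}_\text{train}) \le D_{\text{MMD}}(P_\text{test}, P_\text{train}) + D_{\text{MMD}}(P_\text{train}, \hat{P}_\text{train}).
\]
By Proposition~\ref{prop: concentration of mmd} applied to the iid sample from $P_\text{train}$, with probability at least $1-\delta$ we have $D_{\text{MMD}}(P_\text{train}, \hat{P}_\text{train}) = \Vert \mu_{\hat{P}_\text{train}} - \mu_{P_\text{train}} \Vert_{\mathcal{H}} \le \frac{K}{\sqrt{n}}(1 + \sqrt{2\log(1/\delta)})$, on which event the displayed quantity is $\le \eta$, i.e.\ $P_\text{test} \in \mathcal{K}$. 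Hence $\mathbb{P}[P_\text{test} \notin \mathcal{K}] \le \delta$.

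For the second term, I would condition on the event $\{P_\text{test} \in \mathcal{K}\}$ and combine assumption (ii) with the KME identity $\mathbb{E}_Q[\ell(f^*_\text{test}, z)] = \langle \ell(f^*_\text{test}, \cdot), \mu_Q \rangle_{\mathcal{H}}$ (Proposition~\ref{prop_kme}(ii)) and Cauchy--Schwarz to obtain, for every $Q \in \mathcal{K}$,
\[
\mathbb{E}_Q[\ell(f^*_\text{test}, z)] - \mathbb{E}_{P_\text{test}}[\ell(f^*_\text{test}, z)] = \langle \ell(f^*_\text{test}, \cdot), \mu_Q - \mu_{P_\text{test}} \rangle_{\mathcal{H}} \le \Vert \ell(f^*_\text{test}, \cdot) \Vert_{\mathcal{H}}\, D_{\text{MMD}}(Q, P_\text{test}).
\]
A second triangle inequality, $D_{\text{MMD}}(Q, P_\text{test}) \le D_{\text{MMD}}(Q, \hat{P}_\text{train}) + D_{\text{MMD}}(\hat{P}_\text{train}, P_\text{test}) \le \eta + \eta = 2\eta$ (the first bound because $Q \in \mathcal{K}$, the second because $P_\text{test} \in \mathcal{K}$), then gives $\max_{Q \in \mathcal{K}} \mathbb{E}_Q[\ell(f^*_\text{test}, z)] - \mathbb{E}_{P_\text{test}}[\ell(f^*_\text{test}, z)] \le 2\eta \Vert \ell(f^*_\text{test}, \cdot) \Vert_{\mathcal{H}}$ on $\{P_\text{test} \in \mathcal{K}\}$, so the conditional probability in \eqref{eq: gen bound in shift} is zero as soon as $\varepsilon \ge 2\eta \Vert \ell(f^*_\text{test}, \cdot) \Vert_{\mathcal{H}}$. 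Combining the two estimates and substituting the value of $\eta$ yields the asserted inequality with probability at least $1-\delta$.

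I do not expect a genuine obstacle here: the only new ingredient relative to Theorem~\ref{thm: MMD} is inflating the radius by $D_{\text{MMD}}(P_\text{train}, P_\text{test})$ so that $\mathcal{K}$ still covers $P_\text{test}$ with high probability despite the shift, and everything else---the KME representation, Cauchy--Schwarz, the two triangle inequalities, and the concentration of $\Vert \mu_{\hat{P}_\text{train}} - \mu_{P_\text{train}} \Vert_{\mathcal{H}}$---transfers verbatim. The mild care required is bookkeeping: tracking which triangle inequality uses the data-dependent center $\hat{P}_\text{train}$ and which uses the deterministic shift, and noting that the bound collapses to Theorem~\ref{thm: MMD} when $P_\text{train} = P_\text{test}$, which is a useful sanity check.
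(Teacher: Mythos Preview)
Your proposal is correct and mirrors the paper's own proof essentially step for step: both control $\mathbb{P}[P_\text{test}\notin\mathcal{K}]$ via the triangle inequality $D_\text{MMD}(P_\text{test},\hat P_\text{train})\le D_\text{MMD}(P_\text{test},P_\text{train})+D_\text{MMD}(P_\text{train},\hat P_\text{train})$ combined with Proposition~\ref{prop: concentration of mmd}, and both bound the second term using the KME identity, Cauchy--Schwarz, and a second triangle inequality through $\hat P_\text{train}$ to obtain $2\eta\Vert\ell(f^*_\text{test},\cdot)\Vert_{\mathcal H}$.
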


Compared to Theorem \ref{thm: MMD}, Theorem \ref{thm:mmd dro shift} has the extra condition that $D_\text{MMD}(P_\text{train}, P_\text{test}) < +\infty$ which signifies the amount of distribution shift. Correspondingly the MMD ball size needs to be chosen to cover this shift amount. We provide the proof that details these modifications in Appendix~\ref{sec:proof of mmd shift}.

Next, we define 1-Wasserstein DRO under distribution shift as \begin{equation}\label{eq: wasserstein dro dist shift}
    \min_{f\in \mathcal{F}} \max_{Q: D_{W_1}(Q, \hat{P}_\text{train}) \leq \eta}
    \mathbb{E}_{Q}
    [\ell (f, z)]
\end{equation}
where $\hat{P}_\text{train}$ is the empirical distribution constructed by iid samples $\{z_i\}_{i = 1}^n$ from $P_\text{train}$. We now denote $f^*_\text{W-dro}$ as the solution of \eqref{eq: wasserstein dro dist shift}.

\begin{theorem}[Generalization Bounds for 1-Wasserstein DRO under Distributional Shift]\label{thm:Wasserestin shift}
Suppose that $D_{W_1}(P_\text{train}, P_\text{test}) < +\infty$ and $P_\text{train}$ is a light-tailed distribution in the sense that  there exists $a>1$ such that
$$
A:=\mathbb{E}_{P_\text{train}}[\exp \left(\|z\|^{a}\right)]<\infty.
$$
Then, there exists constants $c_1$ and $c_2$ that only depends on $a, A,$ and $d$ such that for all $\delta \geq 0$, if we choose ball size $\eta = D_{W_1}(P_\text{train}, P_\text{test}) + \big(\frac{\log (c_1/\delta)} {c_{2} n}\big)^{1 / \max \{d, 2\}}$ and $n \geq \frac{\log (c_{1}/\delta)}{c_{2}}$, then 1-Wasserstein DRO solution $f^*_\text{W-dro}$ satisfies
\begin{equation*}
	\mathbb{E}_{P_\text{test}} \ell (f^*_{\text{W-dro}}, z) 
	- \mathbb{E}_{P_\text{test}} \ell(f^*_\text{test}, z)
	\leq 2\| \ell(f^*_\text{test}, \cdot)\|_\text{Lip}
	\Big\{ D_{W_1}(P_\text{train}, P_\text{test}) + \Big(\frac{\log (c_1/\delta)} {c_{2} n}\Big)^{1 / \max \{d, 2\}}\Big\}
\end{equation*}
with probability at least $ 1- \delta$.
\end{theorem}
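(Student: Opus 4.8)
The plan is to invoke the general distribution-shift bound in Theorem~\ref{thm: general dro under shift} with the ambiguity set $\mathcal{K} = \{Q : D_{W_1}(Q, \hat{P}_\text{train}) \leq \eta\}$ and then control each of its two terms separately, mirroring the proof of Theorem~\ref{thm: MMD} (equivalently Theorem~\ref{thm: Wasserstein}) but inserting a triangle-inequality correction that accounts for the gap between $P_\text{train}$ and $P_\text{test}$. Note first that $f^*_\text{test}$ is a fixed (non-random) minimizer under $P_\text{test}$, so the only randomness enters through $\hat{P}_\text{train}$.

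For the first term $\mathbb{P}[P_\text{test} \notin \mathcal{K}]$, I would apply the triangle inequality for $D_{W_1}$:
\[
D_{W_1}(P_\text{test}, \hat{P}_\text{train}) \leq D_{W_1}(P_\text{test}, P_\text{train}) + D_{W_1}(P_\text{train}, \hat{P}_\text{train}).
\]
Since $\{z_i\}_{i=1}^n$ are iid from the light-tailed $P_\text{train}$, Lemma~\ref{lemma: concentraion of W} gives $D_{W_1}(P_\text{train}, \hat{P}_\text{train}) \leq (\log(c_1/\delta)/(c_2 n))^{1/\max\{d,2\}}$ with probability at least $1-\delta$ whenever $n \geq \log(c_1/\delta)/c_2$. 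Hence, with the chosen $\eta = D_{W_1}(P_\text{train}, P_\text{test}) + (\log(c_1/\delta)/(c_2 n))^{1/\max\{d,2\}}$, we obtain $D_{W_1}(P_\text{test}, \hat{P}_\text{train}) \leq \eta$, i.e.\ $P_\text{test} \in \mathcal{K}$, with probability at least $1-\delta$, so $\mathbb{P}[P_\text{test} \notin \mathcal{K}] \leq \delta$.

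For the second term, I would condition on the event $P_\text{test} \in \mathcal{K}$ and bound, using the Kantorovich--Rubinstein dual representation of $D_{W_1}$ (which yields $\mathbb{E}_{Q_1}[\ell(f^*_\text{test}, z)] - \mathbb{E}_{Q_2}[\ell(f^*_\text{test}, z)] \leq \|\ell(f^*_\text{test}, \cdot)\|_\text{Lip}\, D_{W_1}(Q_1, Q_2)$, finite because $\|\ell(f^*_\text{test}, \cdot)\|_\text{Lip}$ is finite),
\[
\max_{Q \in \mathcal{K}} \mathbb{E}_Q[\ell(f^*_\text{test}, z)] - \mathbb{E}_{P_\text{test}}[\ell(f^*_\text{test}, z)] \leq \|\ell(f^*_\text{test}, \cdot)\|_\text{Lip} \max_{Q \in \mathcal{K}} D_{W_1}(Q, P_\text{test}).
\]
Applying the triangle inequality once more, for any $Q \in \mathcal{K}$ we get $D_{W_1}(Q, P_\text{test}) \leq D_{W_1}(Q, \hat{P}_\text{train}) + D_{W_1}(\hat{P}_\text{train}, P_\text{test}) \leq \eta + \eta = 2\eta$, where the last step uses $Q \in \mathcal{K}$ and the conditioning event $P_\text{test} \in \mathcal{K}$. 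Thus the left-hand side is at most $2\eta \|\ell(f^*_\text{test}, \cdot)\|_\text{Lip}$ deterministically on this event, so taking $\varepsilon = 2\eta \|\ell(f^*_\text{test}, \cdot)\|_\text{Lip}$ makes the conditional probability in Theorem~\ref{thm: general dro under shift} vanish. Combining the two terms then gives the claimed bound with probability at least $1-\delta$.

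Since every step reuses machinery already in place (Theorem~\ref{thm: general dro under shift}, Lemma~\ref{lemma: concentraion of W}, and the Wasserstein dual), there is no genuine obstacle; the only point requiring care is applying the triangle inequality in the correct direction twice --- once to absorb the shift $D_{W_1}(P_\text{train}, P_\text{test})$ into the ball radius so that $\mathcal{K}$ still covers $P_\text{test}$ with high probability, and once to bound $D_{W_1}(Q, P_\text{test})$ by $2\eta$ for $Q$ in the ambiguity set --- together with the observation that the concentration lemma must be invoked for $P_\text{train}$, the distribution that actually generates the data, rather than for $P_\text{test}$.
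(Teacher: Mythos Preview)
Your proposal is correct and follows essentially the same route as the paper's proof: invoke Theorem~\ref{thm: general dro under shift}, use the triangle inequality together with Lemma~\ref{lemma: concentraion of W} to show $P_\text{test}\in\mathcal{K}$ with probability at least $1-\delta$, and then use Kantorovich--Rubinstein duality plus the triangle inequality again to bound $\max_{Q\in\mathcal{K}}D_{W_1}(Q,P_\text{test})\leq 2\eta$ on that event. If anything, you are slightly more explicit than the paper in spelling out the first triangle-inequality step.
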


Similar to the MMD DRO case, Theorem \ref{thm:Wasserestin shift}, compared to Theorem \ref{thm: Wasserstein}, involves the amount of distribution shift $D_{W_1}(P_\text{train}, P_\text{test}) < +\infty$ and correspondingly a ball size that is chosen to cover this amount. The detailed proof of Theorem \ref{thm:Wasserestin shift} is deferred to Appendix~\ref{sec:proof wasserstein shift}.

With the generalization bounds in Theorems  \ref{thm:mmd dro shift} and \ref{thm:Wasserestin shift}, we now explain how MMD and Wasserstein DROs can have potential amplified strengths over ERM under distribution shift. To this end, let us consider using the classical analysis of ERM to bound the excess risk for the testing objective \eqref{testing excess risk}. This entails a decomposition
\begin{equation*}
\begin{aligned}
  &~\mathbb{E}_{P_\text{test}}
    \ell(\hat f^*, z)
    - \mathbb{E}_{P_\text{test}}
    \ell(f^*_\text{test}, z)\\
=&~[\mathbb{E}_{P_\text{test}} \ell (\hat{f}^*, z) - \mathbb{E}_{\hat{P}_\text{train}} \ell (\hat{f}^*, z)]
	+ [ \mathbb{E}_{\hat{P}_\text{train}} \ell (\hat{f}^*, z) -  \mathbb{E}_{\hat{P}_\text{train}}\ell (f^*_\text{test}, z)]
	+[\mathbb{E}_{\hat{P}_\text{train}}\ell (f^*_\text{test}, z) - \mathbb{E}_{P_\text{test}} \ell(f^*_\text{test}, z)]  
\end{aligned}
\end{equation*}
where the second term is $\leq0$ and the third term depends only on the true loss $\ell(f^*_\text{test},\cdot)$. The key is to analyze the first term, which we decompose into
\begin{equation*}
    \mathbb{E}_{P_\text{test}} \ell (\hat{f}^*, z) - \mathbb{E}_{\hat{P}_\text{train}} \ell (\hat{f}^*, z)=[\mathbb{E}_{P_\text{test}}\ell(\hat{f}^*, z) - \mathbb{E}_{P_\text{train}}\ell(\hat{f}^*, z) ] + [\mathbb{E}_{P_\text{train}}\ell(\hat{f}^*, z)
    - \mathbb{E}_{\hat{P}_\text{train}}\ell(\hat{f}^*, z) ].
\end{equation*}
If we use the standard approach of uniformly bounding over the hypothesis class $\mathcal{L}$, then from the first component above we would obtain a bound that involves
\begin{equation}
\sup_{f\in \mathcal{F}}\| l(f, \cdot) \|_\mathcal{H} D_\text{MMD}(P_\text{train}, P_\text{test}) \label{dist shift major mmd}
\end{equation}
or  
\begin{equation}
\sup_{f\in \mathcal{F}}\| l(f, \cdot) \|_\text{Lip} D_{W_1}(P_\text{train}, P_\text{test}).\label{dist shift major w}
\end{equation}
Now, consider the situation where there is a large distribution shift, i.e., $D(P_\text{train}, P_\text{test})$ is big, to the extent that it dominates the error coming from the training set noise that scales with $n$. In this case, the quantity \eqref{dist shift major mmd} or \eqref{dist shift major w} could become the dominant term in the ERM bound. In contrast, in MMD DRO and Wasserstein DRO, the dominant terms are
\begin{equation}
    2\| \ell(f^*_\text{test},\cdot)\|_\mathcal{H} D_\text{MMD}(P_\text{train}, P_\text{test}),\label{dist shift major mmd dro}
\end{equation}
and
\begin{equation}
    2\| \ell(f^*_\text{test},\cdot)\|_\text{Lip} D_{W_1}(P_\text{train}, P_\text{test})\label{dist shift major w dro}
\end{equation}
respectively. Comparing \eqref{dist shift major mmd dro} and \eqref{dist shift major w dro} with \eqref{dist shift major mmd} and \eqref{dist shift major w}, we see that ERM could amplify the impact of distribution shift $D_\text{MMD}(P_\text{train}, P_\text{test})$ or $D_{W_1}(P_\text{train}, P_\text{test})$ by a factor that involves a uniform bound on $\{\ell(f,\cdot):f\in\mathcal F\}$, while the corresponding factor in DROs only involves the true loss function $\ell(f^*_\text{test},\cdot)$. This reveals a potentially significant benefit of DRO over ERM in the presence of distribution shift.

\section{Missing Proofs of Additional Results}

\subsection{Proof of Theorem~\ref{thm: Wasserstein}}\label{sec:proof wasserstein}

Starting from the bound \eqref{eq: general bound} in Theorem \ref{thm: general dro}, we separate our proof into two parts. 
First, to control $ \mathbb{P}[P \notin \mathcal{K}]$, we set 
$$\eta = \Big(\frac{\log (c_1/\delta)} {c_{2} n}\Big)^{1 / \max \{d, 2\}}.$$ Then, by Lemma~\ref{lemma: concentraion of W}, for all $n \geq \frac{\log (c_{1}/\delta)}{c_{2}}$, we have that
$\mathbb{P}[P \notin \mathcal{K}] \leq \delta$.
For the second term $\mathbb{P}[
	\max\limits_{Q\in \mathcal{K}}\mathbb{E}_Q \ell (f^*, z) - \mathbb{E}_P [\ell(f^*, z)] > \varepsilon| P \in \mathcal{K}]$ in \eqref{eq: general bound},
conditional on $P \in \mathcal{K}$ we have that
\begin{equation*}
\begin{aligned}
    \max\limits_{Q\in \mathcal{K}}\mathbb{E}_Q \ell (f^*, z) - \mathbb{E}_P [\ell(f^*, z)]
    \leq
    \max_{Q\in \mathcal{K}} 
    \|\ell(f^*, \cdot)||_\text{Lip} D_{W_1}(Q,  P),
\end{aligned}
\end{equation*}
by using \cite{kantorovich1958space}. Then, we have
\begin{equation*}
\begin{aligned}
    \max\limits_{Q\in \mathcal{K}}\mathbb{E}_Q \ell (f^*, z) - \mathbb{E}_P [\ell(f^*, z)]
\leq&~  \max_{Q\in \mathcal{K}} 
    \|\ell(f^*, \cdot)||_\text{Lip} D_{W_1}(Q,  P)\\
\leq&~  \max_{Q\in \mathcal{K}} 
    \|\ell(f^*, \cdot)||_\text{Lip} (D_{W_1}(Q,  \hat{P}) + D_{W_1}(\hat{P},  P))
    \leq 2 \|\ell(f^*, \cdot)||_\text{Lip} \eta,
\end{aligned}
\end{equation*}
where the last line follows from the triangle inequality. Hence, 
\begin{equation*}
\begin{aligned}
    \mathbb{P}[\max\limits_{Q\in \mathcal{K}}\mathbb{E}_Q \ell (f^*, z) - \mathbb{E}_P [\ell(f^*, z)] > \varepsilon
	| P \in \mathcal{K}] 
	\leq&~ \mathbb{P}[2 \|\ell(f^*, \cdot)||_\text{Lip} \eta
	> \varepsilon| P \in \mathcal{K}] = 0.
\end{aligned}
\end{equation*}
provided that $\varepsilon \geq 2 \|\ell(f^*, \cdot)||_\text{Lip} \eta$. Therefore,  with probability at least $1 - \delta$, 
\begin{equation*}
\begin{aligned}
    \mathbb{E}_P [\ell (f^*_{\text{W-dro}}, z)] - \mathbb{E}_P [\ell(f^*, z)]
    \leq 2 \|\ell(f^*, \cdot)||_\text{Lip} \eta
    = 2\| \ell(f^*, \cdot)\|_\text{Lip}
	\Big(\frac{\log (c_1/\delta)} {c_{2} n}\Big)^{1 / \max \{d, 2\}}.
\end{aligned}
\end{equation*}
This completes our proof.

\subsection{Proof of Theorem~\ref{thm:general divergence dro}}\label{sec:proof general divergence}
For simplicity, let
\begin{equation*}
    \mathcal{E}_1 = \{P \in \mathcal{K}\}, \quad
    \mathcal{E}_2 = \{\mathbb{E}_{P_0} \ell(f^*, z) - \mathbb{E}_P [\ell(f^*, z)] \leq C_2(\ell(f^*, \cdot), n, \delta)\}.
\end{equation*}
Similar to proof of  Theorem~\ref{thm: general dro}, we have that
\begin{equation*}
\begin{aligned}
&~    \mathbb{P}[\mathbb{E}_P [\ell (f^*_{\text{$\phi$-dro}}, z)] -
\mathbb{E}_P [\ell(f^*, z)] > \varepsilon]\\
\leq&~ \mathbb{P}[[\mathbb{E}_P  \ell (f^*_{\text{$\phi$-dro}}, z) - \max\limits_{Q\in \mathcal{K}}\mathbb{E}_Q  \ell (f^*_{\text{$\phi$-dro}}, z)] +  [\max\limits_{Q\in \mathcal{K}}\mathbb{E}_Q \ell (f^*, z) - \mathbb{E}_P  \ell(f^*, z)] > \varepsilon]\\
\leq&~ \mathbb{P}[\mathcal{E}_1^\complement] + \mathbb{P}
[\{[\mathbb{E}_P  \ell (f^*_{\text{$\phi$-dro}}, z) 
- \max\limits_{Q\in \mathcal{K}}\mathbb{E}_Q  \ell (f^*_{\text{$\phi$-dro}}, z)] 
+  [\max\limits_{Q\in \mathcal{K}}\mathbb{E}_Q  \ell (f^*, z) 
- \mathbb{E}_P  \ell(f^*, z)] > \varepsilon\} \cap \mathcal{E}_1]\\
\leq&~ \delta/2 + \mathbb{P}[
	\{\max\limits_{Q\in \mathcal{K}}\mathbb{E}_Q \ell (f^*, z) - \mathbb{E}_P [\ell(f^*, z)] > \varepsilon\} \cap \mathcal{E}_1],
\end{aligned}
\end{equation*}
where the last line follows from $\mathbb{P}[\mathcal{E}_1^\complement] \leq \delta/2$ by definition and $\mathbb{E}_P  \ell (f^*_{\text{$\phi$-dro}}, z) - \max\limits_{Q\in \mathcal{K}}\mathbb{E}_Q  \ell (f^*_{\text{$\phi$-dro}}, z) \leq 0$ if $\mathcal{E}_1$ occurs according to  worst case definition of DRO. Now we consider
\begin{equation*}
    \mathbb{P}[
	\{\max\limits_{Q\in \mathcal{K}}\mathbb{E}_Q \ell (f^*, z) - \mathbb{E}_P [\ell(f^*, z)] > \varepsilon\} \cap \mathcal{E}_1],
\end{equation*}
where we set $\varepsilon =   C_1(\ell(f^*, \cdot), \sqrt{\eta}, \phi) + C_2(\ell(f^*, \cdot), n, \delta)$.
This can be rewritten as
\begin{equation*}
\begin{aligned}
&~     \mathbb{P}[
	\{\max\limits_{Q\in \mathcal{K}}\mathbb{E}_Q \ell (f^*, z) - \mathbb{E}_P [\ell(f^*, z)] > \varepsilon\} \cap \mathcal{E}_1]\\
=&~    \mathbb{P}[
	\{\max\limits_{Q\in \mathcal{K}}\mathbb{E}_Q \ell (f^*, z) - \mathbb{E}_P [\ell(f^*, z)] > \varepsilon\} \cap \mathcal{E}_1 \cap\mathcal{E}_2] + 
	\mathbb{P}[
	\{\max\limits_{Q\in \mathcal{K}}\mathbb{E}_Q \ell (f^*, z) - \mathbb{E}_P [\ell(f^*, z)] > \varepsilon\} \cap \mathcal{E}_1 \cap \mathcal{E}_2^\complement]\\
\leq&~ \mathbb{P}[\{\max\limits_{Q\in \mathcal{K}}\mathbb{E}_Q \ell (f^*, z) - \mathbb{E}_{P_0} \ell(f^*, z) > \varepsilon
- C_2(\ell(f^*, \cdot), n, \delta) \cap \mathcal{E}_1 \cap \mathcal{E}_2\}] + \mathbb{P}[
	\mathcal{E}_2^\complement]\\
\leq&~ 0 + \delta/2,
\end{aligned}
\end{equation*}
where the last line follows from 
\begin{equation*}
    \max\limits_{Q\in \mathcal{K}}\mathbb{E}_Q \ell (f^*, z) - \mathbb{E}_{P_0} \ell(f^*, z)
    \leq  C_1(\ell(f^*, \cdot), \sqrt{\eta}, \phi) < \varepsilon
- C_2(\ell(f^*, \cdot), n, \delta).
\end{equation*}
This completes our proof.

\subsection{Proof of Theorem~\ref{thm:chi square bound}}\label{subsec: proof of chi}
It suffices to verify assumptions (i)(ii)(iii) in Theorem~\ref{thm:general divergence dro} with 
$C_1(\ell(f^*, \cdot), \sqrt{\eta}, \phi) = \sqrt{\eta} \|\ell(f^*, \cdot)\|_\infty$, $C_2(\ell(f^*, \cdot), n, \delta) = \|\ell(f^*, \cdot)\|_\infty\sqrt{\frac{2\log (2/\delta)}{n}}$.

First, by \citep[(4.2)]{gaunt2017chi}, we have that
\begin{equation*}
    \mathbb{P}[\chi^2(P \| \hat{P}) > \eta] 
    \leq \mathbb{P}[\chi^2_{(m-1)} > n\eta] + \frac{250 m}{\sqrt{n} p_{\min}^{3/2}},
\end{equation*}
where $\chi^2_{(m-1)}$ is a $\chi^2$ random variable with degree of freedom $m-1$. According to \citep[Lemma 1]{laurent2000adaptive}, 
\begin{equation*}
    \mathbb{P}[\chi^2_{(m-1)} > (m-1) + 2\sqrt{(m-1)x} + 2x ]\leq e^{-x}.
\end{equation*}
Therefore, for all $n \geq \frac{10^6 m^2}{p^3_{\min}\delta^2}$, 
\begin{equation*}
    \mathbb{P}[\mathbb{P}\notin \mathcal{K}] = \mathbb{P}[\chi^2( P \| \hat{P} ) >  \eta] \leq \delta/4 + \delta/4 = \delta/2
\end{equation*}
and (i) is satisfied. Second, 
note that
\begin{equation*}
    \max\limits_{Q\in \mathcal{K}}\mathbb{E}_{Q} \ell(f^*, z) - \mathbb{E}_{\hat{P}} \ell(f^*, z)    
    = \max\limits_{Q\in \mathcal{K}} \mathbb{E}_{\hat{P}}
    [(L-1)\ell(f^*, z) ],
\end{equation*}
where $L = \frac{\mathrm{d}Q}{\mathrm{d}\hat{P}}$ is the likelihood ratio. Then, by Cauchy-Schwarz inequality, 
\begin{equation*}
\begin{aligned}
 \big|\mathbb{E}_{\hat{P}}
    [(L-1)\ell(f^*, z) ] \big|
\leq&~ \sqrt{\mathbb{E}_{\hat{P}}[((L-1)/\sqrt{L})^2]}
    \sqrt{\mathbb{E}_{\hat{P}}[(\sqrt{L} \ell(f^*, z))^2]}\\
\leq&~ \sqrt{\mathbb{E}_{\hat{P}}[(L-1)^2/L]}
    \sqrt{\mathbb{E}_{Q}[\ell(f^*, z)^2]}\\
\leq&~  \sqrt{\chi^2(Q \| \hat{P})} \sqrt{\|\ell(f^*, \cdot)\|_\infty^2}
= \sqrt{\eta} \|\ell(f^*, \cdot)\|_\infty,
\end{aligned}
\end{equation*}
This gives (ii). 
Third, by Hoeffding's inequality, 
\begin{equation*}
    \mathbb{P}[\mathbb{E}_{P_0} \ell(f^*, z) - \mathbb{E}_P [\ell(f^*, z)] > t] \leq e^{-\frac{nt^2}{2\|\ell(f^*, \cdot)\|_\infty^2}}
\end{equation*}
and (iii) is satisfied. This completes our proof.

\subsection{Proof of Theorem \ref{thm:mmd dro shift}}\label{sec:proof of mmd shift}
Starting from the bound \eqref{eq: gen bound in shift} in Theorem \ref{thm: general dro under shift}, we separate our proof into two parts.
First, to control $ \mathbb{P}[P_\text{test} \notin \mathcal{K}]$, we set 
$$\eta = D_\text{MMD}(P_\text{train}, P_\text{test}) + \frac{K }{\sqrt{n} } (1 + \sqrt{2 \log(1/\delta)}).$$ Then, by Proposition~\ref{prop: concentration of mmd}, we have that
\begin{equation*}
    \mathbb{P}[P_\text{test} \notin \mathcal{K}]
    = \mathbb{P}[D_\text{MMD}(P_\text{test}, \hat{P}_\text{train}) > \eta] \leq 
    \mathbb{P}\Big[
    D_\text{MMD}(P_\text{train}, \hat{P}_\text{train}) > 
    \frac{K }{\sqrt{n} } (1 + \sqrt{2 \log(1/\delta)})
    \Big]
    \leq 1- \delta.
\end{equation*}

For the second term $\mathbb{P}[
	\max\limits_{Q\in \mathcal{K}}\mathbb{E}_Q \ell (f^*_\text{test}, z) 
	- \mathbb{E}_{P_\text{test}} \ell(f^*_\text{test}, z) > \varepsilon| P_\text{test} \in \mathcal{K}]$ in \eqref{eq: general bound},
conditional on $P_\text{test} \in \mathcal{K}$, we have that
\begin{equation*}
\begin{aligned}
    \max\limits_{Q\in \mathcal{K}}\mathbb{E}_Q \ell (f^*_\text{test}, z) 
	- \mathbb{E}_{P_\text{test}} \ell(f^*_\text{test}, z)
    =
    \max_{Q\in \mathcal{K}} 
    \langle \ell (f^*_\text{test}, \cdot), \mu_{Q} - \mu_{P_\text{test}} \rangle_{\mathcal{H}},
\end{aligned}
\end{equation*}
which follows from \eqref{eq: expectation to inner product}. Then, we have
\begin{equation*}
\begin{aligned}
   \max\limits_{Q\in \mathcal{K}}\mathbb{E}_Q \ell (f^*_\text{test}, z) 
	- \mathbb{E}_{P_\text{test}} \ell(f^*_\text{test}, z)
=&~ \max_{Q\in \mathcal{K}} 
    \langle \ell (f^*_\text{test}, \cdot), \mu_{Q} - \mu_{P_\text{test}} \rangle_{\mathcal{H}}\\
\leq&~  \max_{Q \in \mathcal{K}}
	\Vert \ell (f^*_\text{test}, \cdot) \Vert_{ \mathcal{H}} 
	\Vert \mu_{Q} - \mu_{P_\text{test}} \Vert_{ \mathcal{H}}
    \leq 2 \Vert \ell (f^*_\text{test}, \cdot) \Vert_{ \mathcal{H}}  \eta,
\end{aligned}
\end{equation*}
where the first inequality follows from the Cauchy-Schwarz inequality, and the last inequality holds since for all $P \in \mathcal{K}$,
\begin{equation*}
\begin{aligned}
    \Vert \mu_{Q} - \mu_{P_\text{test}} \Vert_{\mathcal{H}}
	\leq \Vert \mu_Q - \mu_{\hat{P}_\text{train}} \Vert_{ \mathcal{H}}
	+ \Vert \mu_{\hat{P}_\text{train}} - \mu_{P_\text{test}} \Vert_{\mathcal{H}}
	\leq 2 \eta
\end{aligned}
\end{equation*}
by the triangle inequality. Hence, 
\begin{equation*}
\begin{aligned}
    \mathbb{P}[\max\limits_{Q\in \mathcal{K}}\mathbb{E}_Q \ell (f^*_\text{test}, z) - \mathbb{E}_{P_\text{test}} \ell(f^*_\text{test}, z) > \varepsilon
	| P_\text{test} \in \mathcal{K}] 
	\leq&~ \mathbb{P}[2 \|\ell(f^*_\text{test}, \cdot)||_\mathcal{H} \eta
	> \varepsilon| P_\text{test} \in \mathcal{K}] = 0.
\end{aligned}
\end{equation*}
provided that $\varepsilon \geq 2 \|\ell(f^*_\text{test}, \cdot)||_\mathcal{H} \eta$. Therefore,  with probability at least $1 - \delta$, 
\begin{equation*}
\begin{aligned}
\mathbb{E}_{P_\text{test}} \ell (f^*_{\text{M-dro}}, z) - \mathbb{E}_{P_\text{test}} \ell(f^*_\text{test}, z)
\leq&~ 2 \|\ell(f^*_\text{test}, \cdot)||_\mathcal{H} \eta\\
	\leq&~ 2\Vert \ell (f^*_\text{test}, \cdot) \Vert_{ \mathcal{H}} \Big\{D_\text{MMD}(P_\text{train}, P_\text{test}) + \frac{K }{\sqrt{n} } (1 + \sqrt{2 \log(1/\delta)})\Big\}   .
\end{aligned}
\end{equation*}
This completes our proof.

\subsection{Proof of Theorem \ref{thm:Wasserestin shift}}\label{sec:proof wasserstein shift}
Starting from the bound \eqref{eq: gen bound in shift} in Theorem \ref{thm: general dro under shift}, we separate our proof into two parts.
First, to control $ \mathbb{P}[P_\text{test} \notin \mathcal{K}]$, we set 
$$\eta = D_{W_1}(P_\text{train}, P_\text{test}) + \Big(\frac{\log (c_1/\delta)} {c_{2} n}\Big)^{1 / \max \{d, 2\}}.$$ Then, by Lemma~\ref{lemma: concentraion of W}, for all $n \geq \frac{\log (c_{1}/\delta)}{c_{2}}$, we have that
\begin{equation*}
    \mathbb{P}[P_\text{test} \notin \mathcal{K}]
    = \mathbb{P}[D_{W_1}(P_\text{test}, \hat{P}_\text{train}) > \eta] \leq 
    \mathbb{P}\Big[
    D_{W_1}(P_\text{train}, \hat{P}_\text{train}) > 
    \Big(\frac{\log (c_1/\delta)} {c_{2} n}\Big)^{1 / \max \{d, 2\}}
    \Big]
    \leq 1- \delta.
\end{equation*}

For the second term $\mathbb{P}[
	\max\limits_{Q\in \mathcal{K}}\mathbb{E}_Q \ell (f^*_\text{test}, z) 
	- \mathbb{E}_{P_\text{test}} \ell(f^*_\text{test}, z) > \varepsilon| P_\text{test} \in \mathcal{K}]$ in \eqref{eq: general bound},
conditional on $P_\text{test} \in \mathcal{K}$ we have that
\begin{equation*}
\begin{aligned}
    \max\limits_{Q\in \mathcal{K}}\mathbb{E}_Q \ell (f^*_\text{test}, z) - \mathbb{E}_{P_\text{test} } \ell(f^*_\text{test}, z)
    \leq
    \max_{Q\in \mathcal{K}} 
    \|\ell(f^*_\text{test}, \cdot)||_\text{Lip} D_{W_1}(Q,  P_\text{test} ),
\end{aligned}
\end{equation*}
by using \cite{kantorovich1958space}. Then, we have
\begin{equation*}
\begin{aligned}
    &~\max\limits_{Q\in \mathcal{K}}\mathbb{E}_Q \ell (f^*_\text{test}, z) - \mathbb{E}_{P_\text{test}} \ell(f^*_\text{test}, z)\\
\leq&~  \max_{Q\in \mathcal{K}} 
    \|\ell(f^*_\text{test}, \cdot)||_\text{Lip} D_{W_1}(Q,  P_\text{test})\\
\leq&~  \max_{Q\in \mathcal{K}} 
    \|\ell(f^*_\text{test}, \cdot)||_\text{Lip} (D_{W_1}(Q,  \hat{P}_\text{train}) + D_{W_1}(\hat{P}_\text{train},  P_\text{test}))
    \leq 2 \|\ell(f^*_\text{test}, \cdot)||_\text{Lip} \eta,
\end{aligned}
\end{equation*}
where the last line follows from the triangle inequality. Hence, 
\begin{equation*}
\begin{aligned}
    \mathbb{P}[\max\limits_{Q\in \mathcal{K}}\mathbb{E}_Q \ell (f^*_\text{test}, z) - \mathbb{E}_{P_\text{test}} \ell(f^*_\text{test}, z) > \varepsilon
	| P_\text{test} \in \mathcal{K}] 
	\leq&~ \mathbb{P}[2 \|\ell(f^*_\text{test}, \cdot)||_\text{Lip} \eta
	> \varepsilon| P_\text{test} \in \mathcal{K}] = 0.
\end{aligned}
\end{equation*}
provided that $\varepsilon \geq 2 \|\ell(f^*_\text{test}, \cdot)||_\text{Lip} \eta$. Therefore,  with probability at least $1 - \delta$, 
\begin{equation*}
\begin{aligned}
    \mathbb{E}_{P_\text{test}} \ell (f^*_\text{W-dro}, z) - \mathbb{E}_{P_\text{test}} \ell(f^*_\text{test}, z)
    \leq&~ 2 \|\ell(f^*_\text{test}, \cdot)||_\text{Lip} \eta\\
\leq&~ 2\| \ell(f^*_\text{test}, \cdot)\|_\text{Lip}
	\Big\{ D_{W_1}(P_\text{train}, P_\text{test}) + \Big(\frac{\log (c_1/\delta)} {c_{2} n}\Big)^{1 / \max \{d, 2\}}\Big\}.
\end{aligned}
\end{equation*}
This completes our proof.

\end{document}